\DeclareMathOperator{\curl}{curl}
\renewcommand{\grad}{\nabla}
\newcommand{\divv}{\mathrm{div} \,}
\newcommand{\R}{\mathbb{R}}
\newcommand{\C}{\mathbb{C}}
\newcommand{\bmH}{\textbf{H}}
\newcommand{\bmE}{\textbf{E}}
\newcommand{\bmJ}{\textbf{J}}
\newcommand{\bmA}{\textbf{A}}
\newcommand{\bmPsi}{\bm{\Psi}}
\newcommand{\bmvarPsi}{\bm{\varPsi}}
\newcommand{\bmB}{\textbf{B}}
\newcommand{\bmeta}{\bm{\eta}}
\newcommand{\bmn}{\bm{n}}
\newcommand{\bmtheta}{\bm{\theta}}
\newcommand{\bmP}{\textbf{P}}
\newcommand{\sfa}{\mathsf{A}}
\newcommand{\sfaprime}{\mathsf{A}'}
\newcommand{\sfaI}{\mathsf{A}_{i}}
\newcommand{\sfaprimeI}{\mathsf{A}'_{i}}
\newcommand{\sfaZero}{\mathsf{A}_{0}}
\newcommand{\sfaprimeZero}{\mathsf{A}'_{0}}
\renewcommand*{\d}[1]{\thinspace \dif#1}
\newcommand{\dif}{\mathrm{d}}
\renewcommand*{\supp}{\mathrm{supp}}
\newcommand{\mi}{\mathrm{i}}
\newcommand{\Id}{\mathrm{Id}}
\newcommand{\X}{\mathcal{Q}} 
\title{A robust inversion method for quantitative 3D shape reconstruction from
  coaxial eddy-current measurements }
\author{Houssem Haddar\footnotemark[1]
\and Zixian Jiang\footnotemark[2]
\and Mohamed Kamel RIAHI\footnotemark[3]}
\begin{document}
\maketitle
\slugger{sisc}{xxxx}{xx}{x}{x--x}

\renewcommand{\thefootnote}{\fnsymbol{footnote}}
\footnotetext[1]{CMAP, Ecole Polytechnique, Route de Saclay, 91128 Palaiseau Cedex FRANCE.
(\email{haddar@cmap.polytchnique.fr})}
\footnotetext[2]{Centre for Industrial Mathmatics, Universitaet Bremen, Germany.
(\email{jiang@math.uni-bremen.de})}
\footnotetext[3]{Department of mathematical science, New Jersey Institute of Technology, University Heights Newark, New Jersey, USA.(\email{riahi@njit.edu})}

\begin{abstract}
	This work is motivated by the monitoring of conductive clogging deposits
        in steam generator at the level of support plates. One would like to
        use monoaxial coils measurements to obtain estimates on the clogging
        volume. We propose a 3D shape optimization technique based on simplified
        parametrization of the geometry adapted to the measurement
        nature and resolution. The direct problem is modeled by the eddy current approximation
        of time-harmonic Maxwell's equations in the low frequency regime. A
        potential formulation is adopted in order to easily handle the complex
        topology of the industrial problem setting. We first characterize the
         shape derivatives of the deposit impedance signal using an adjoint field technique. For the
        inversion procedure, the direct and adjoint
        problems have to be solved for each coil vertical position which is
        excessively time and memory consuming.  To overcome this difficulty, we propose and discuss a
        steepest descent method based on a fixed and invariant
        triangulation. Numerical experiments are presented to illustrate the convergence and the efficiency of the method.
\end{abstract}

\begin{keywords}
Electromagnetism, non-destructive testing,  time harmonic eddy-current, Inverse problem, shape optimization.
\end{keywords}

\begin{AMS}
Primery 49N45, 49Q10, 68U01.
Secondary 90C46, 49N15.
\end{AMS}

\pagestyle{myheadings}
\thispagestyle{plain}
\markboth{3D shape reconstruction from eddy-current measurements}{H. Haddar, Zixian Jiang and M.-K RIAHI}

\section{Introduction}\label{section:introduction}
Non-destructive testing using eddy-current low frequency excitation are widely practiced to detect magnetite deposits in steam generators (SG) in nuclear power plants. These deposits, due to magnetite particles contained in the cooling water, usually accumulate around the quatrefoil support plates (SP) and thus clog the water traffic lane. Many methods and softwares based on signal processing has been developed in order to detect deposits using standard bobbin coils and are widely operational in the nuclear industries (see for instance the database of nondestructive testing \cite{ndtdatabase} and references therein). 
Estimates of the bulk amount of deposits enable to supplement a chemical cleaning process, which in some cases may be ineffective where it leaves significant deposits in the bottom area of the SP foils. The presence of such deposits generates a reduction and re-distribution of the water in SG circulation and can cause flow-induced vibration instability risks. This may harm the safety of the nuclear power plant.

\begin{figure}[!hbp]
\centering
\begin{tabular}{lr}
\includegraphics[scale=.2] {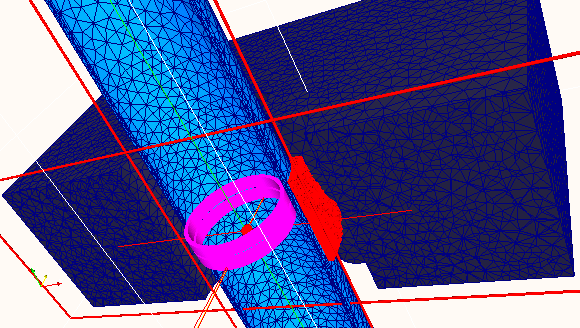}&\hfill
\includegraphics[scale=.2] {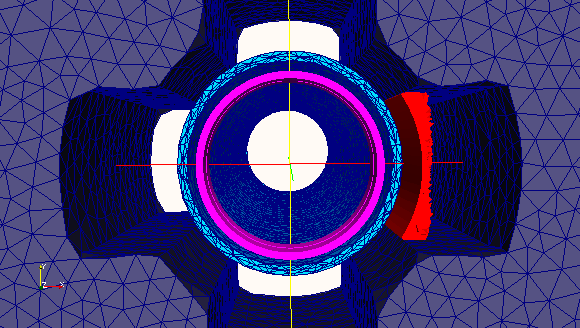}
\end{tabular}
\caption{Three dimensional mesh of the SG and the SP clogging : coils (pink), tube (blue), SP (grey) and deposit (red).}
\label{figintro}
\end{figure}

	In order to obtain better characterizations than those provided  by
        model free methods, we present and discuss a robust inversion algorithm
        (for
        non destructive evaluation using eddy current signals) based on shape
        optimization techniques and adapted parametrizations for the deposit
        shapes.  An overview of techniques for non destructive evaluations using eddy currents  can be found in~\cite{auld1999review} and we also
        refer to~\cite{theddcurrinvers},~\cite{1211580} and \cite{1386227} for
        further engineering considerations. For other model based inversion methods related to
        eddy-currents  we may refer, without being exhaustive,
        to~\cite{bendjoudi:hal-00549199,972391,877774,RPQNE, MR3086051}. In the medical context, several inverse source problems related to
        eddy-current models have been addressed: non-invasive applications for
        electroencephalography, magnetoencephalography\cite{0266-5611-28-1-015006} (see also~\cite{MR2235652}) and magnetic induction tomography~\cite{griffiths2001magnetic,MLHGHB}. 

Stated more precisely, the inverse shape problem we shall investigate aims at retrieving the
support of a conductive deposits using monostatic measurements of coaxial coils
and their computable shape derivatives. Our work can be
seen as an extension of~\cite{jiang:hal-00741616} to a realistic 3D industrial
configuration. Although the deposit geometry can be an arbitrary three
dimensional domain, the
available (monostatic) measurements can only give qualitative information on
the width. Since the objective is to detect the possibility of clogging at the
support plate, we found it appropriate to consider  a deposit concentrated in
only one of the opening regions at the support plate (See Figs.~\ref{figintro}). The geometrical parameters are then the
deposit width at (at most) one measurement position. In practice, it turned out
that a relative robustness with respect to noise can be achieved if one shape
parameter correspond with two vertical positions of the coils. In order to
speed up
the inversion procedure, we are led to consider a fixed geometrical mesh
(adapted to the chosen parametrization). This allows us to obtain an inversion
procedure which is not very sensitive to the number of measurements. Moreover,
in order to avoid troubles due to changes in the conductive region topology, we
adopted a vector potential formulation of the 3D eddy-current model. A careful
study of the shape derivative of the solution to this formulation is
conducted. For related shape derivatives associated with Maxwell's equations we
refer to \cite{MR2738932,MR2971616}. We here treat the potential formulation of the eddy current problem. This derivative allows us to rigorously define the adjoint state,
needed to cheaply compute the coils impedances shape derivatives.

The geometrical setting of the industrial configuration is depicted in
Fig.~\ref{figintro}. We denote by $\Omega$ the computational domain, which will be a
sufficiently large simply connected cylinder. It contains  a conductor domain
$\Omega_{\mathcal{C}}$ composed of the tube, the support plate and eventually a
deposit on the exterior part of the tube: $\Omega_{\mathcal{C}} = \Omega_{t} \cup \Omega_{d} \cup
        \Omega_{p}$, where $t$ stands for the tube, $d$ for the deposit and $p$
        for the SP. The insulator domain $\Omega\setminus\overline\Omega_{\mathcal{C}}$ is split into two parts: $ \Omega_{s} $ that indicates the region inside the tube  where the coil (thus the source $\bmJ$) is located and $\Omega_{v}$ that  denotes the insulator outer region (where the deposit can be formed).
        For our purpose we introduce the surface $\Gamma =
        \overline{\partial\Omega_{d}} \cap \overline{\partial\Omega_{v}}$ that denotes the interface between the deposit and the insulator.
	
	

	Let us now briefly describe the 3D eddy-current model, which derives
        from the full Maxwell's equation in the time harmonic low frequency
        case and the adopted formulation of this problem.  Given the bounded domain $\Omega\subset\R^{3}$, we recall the time-harmonic Maxwell equations:
\begin{equation*}
\begin{array}{lllr}
   & \curl \bmH + (\mi \omega \epsilon - \sigma) \bmE & = \bmJ  & \text{ in } \Omega,\\
   & \curl \bmE - \mi \omega \mu \bmH & = 0  & \text{ in } \Omega,
\end{array}
\end{equation*}
and on the boundary $\partial\Omega$ we impose a magnetic boundary condition
$\bmH\times \bmn = 0$, where $\bmn$ stands for the outward normal to the boundary $\partial\Omega$. Here $\bmH$ and $\bmE$ denotes the magnetic and electric
fields, respectively. $\bmJ$ is the applied current density, $\epsilon$ is the
electric permittivity, $\mu$ is the magnetic permeability and $\sigma$ is the
electric conductivity.
In our case, the applied current density has support strictly included in the
insulator $\Omega_{t} $(interior of the tube).
 By neglecting the displacement current term, we formally obtain the eddy-current model, which reads:
\begin{equation}\label{eq:eddy3d}
\begin{array}{lllr}
& \curl \bmH &= \sigma \bmE + \bmJ   &\text{ in } \Omega,  \\
& \mu \bmH  &=\dfrac{1}{\mi \omega}\curl \bmE   &\text{ in } \Omega,
\end{array}
\end{equation}
We refer to the monograph~\cite{MR2680968} for an extensive overview of
        eddy-current models and formulations. 
	In this paper we adopt a potential formulation in which we look for the magnetic vector potential $\bmA$ and electric scalar potential $V$ (only defined on $\Omega_{\mathcal C}$) that satisfies
\begin{equation}\label{ChF}
\left\{\begin{array}{rll}
\mu \bmH&=\curl \bmA & \text{ in }\Omega, \\
\curl \bmH &= \sigma \bmE + \bmJ   &\text{ in } \Omega,\\
\bmE  &= \mi\omega \bmA + \grad V &\text{ in } \Omega_{\mathcal C}, \\
\divv\bmA&=0 &\text{ in } \Omega.\\
\bmn\times\dfrac{1}{\mu}\curl \bmA&=0 &\text{ on }\partial\Omega,\\
\bmA\cdot\bmn &= 0 &\text{ on }\partial\Omega,
\end{array}\right.
\end{equation}
where the equation \eqref{ChF}$_4$ stands for the Coulomb gauge condition. The
boundary condition~\eqref{ChF}$_5$ stands for the magnetic boundary condition
and the boundary condition~\eqref{ChF}$_6$ is equivalent to
$\epsilon\bmE\cdot\bmn=0$. The electric scalar potential $V$ is determined up to an additive constant in each connected-component of $\Omega_{\mathcal C}$, which has a connected boundary.

	

	  Notice that from Maxwell-Amp\`{e}re equation \eqref{eq:eddy3d}$_1$ we get
\begin{equation}\label{AWeakTMP}
\curl (\mu^{-1} \curl \bmA) - \sigma(\mi\omega \bmA+ \grad V) =
        \bmJ \mbox{ in }\Omega.
\end{equation}
In the following, the space $H(\curl;\Omega)$ indicates the set of real or
complex valued functions ${\bf v}\in(L^2(\Omega))^3$ such that $\curl {\bf
  v}\in(L^2(\Omega))^3$ and define
$$
\mathcal X(\Omega):=\{ {\bf v}\in H(\curl,\Omega), \divv {\bf v} = 0 \text{ in } \Omega, {\bf v}\cdot \bmn=0 \text{ on } \partial\Omega\}.
$$
For a vector magnetic potential $\bmA\in\mathcal X(\Omega)$, an
electric scalar potential $V\in H^1\left(\Omega_{\mathcal{C}}\right)\slash\C$
(the quotient by constants is relative to each connected component separately) 
and a test function $\bmPsi\in\mathcal X(\Omega)$ the weak formulation of
\eqref{AWeakTMP} reads
\begin{equation}\label{ampereWeakTMP}
\int_{\Omega} \dfrac{1}{\mu} \curl \bmA\cdot \curl \overline\bmPsi \d x - \int_{\Omega_{\mathcal C}}\sigma\left( \mi \omega \bmA\cdot \overline\bmPsi + \grad V\cdot \overline\bmPsi\right) \d x = \int_{\Omega} \bmJ\cdot \overline\bmPsi.
\end{equation}
Moreover, for any test function $\varPhi\in H^1\left(\Omega_{\mathcal{C}}\right)\slash\C$ the weak formulation of the necessary condition $-\divv\left(\sigma\bmE\right)=\divv \bmJ$ writes
$- \int_{\Omega_{\mathcal C}}\sigma\bmE\cdot \grad \overline{\varPhi} \d s=\int_{\Omega_{\mathcal C}} \bmJ\cdot \grad \overline{\varPhi} \d s $. Therefore using~\eqref{ChF}$_3$ we obtain:
\begin{equation}\label{divWeak}
-\sigma \int_{\Omega_{\mathcal C}} \left( \mi \omega \bmA + \grad V\right) \cdot \grad \overline\varPhi \d x = \int_{\Omega_{\mathcal C}} \bmJ\cdot \grad \overline\varPhi \d x.
\end{equation}
Following~\cite[Chp-6]{MR2680968} (and references therein), by introducing a constant
$\mu_{*}$, representing a suitable average of $\mu$ in $\Omega$, the Coulomb
gauge condition~\eqref{ChF}$_3$  can be incorporated in equation
\eqref{ampereWeakTMP} in the following way
\begin{equation}\label{ampereWeak}
\int_{\Omega} \dfrac{1}{\mu} \curl \bmA\cdot \curl \overline\bmPsi \d x + \dfrac{1}{ \mu_{*}}\int_{\Omega}  \divv \bmA \divv \overline\bmPsi\d x - \int_{\Omega_{\mathcal C}}\sigma\left( \mi \omega \bmA\cdot \overline\bmPsi + \grad V\cdot \overline\bmPsi\right) \d x = \int_{\Omega} \bmJ\cdot \overline\bmPsi.
\end{equation}
and the variational space $\mathcal X(\Omega)$ would then be replaced by
$\mathcal H(\Omega):=H(\curl,\Omega)\cap H_0(\divv,\Omega)$ or equivalently by
$H^1(\Omega)^3$ since the domain $\Omega$ is convex and sufficiently regular. Indeed, $\divv \bmA=0$ is verified in the weak sense.
Combining equations~\eqref{ampereWeak} with \eqref{divWeak} we can obtain a symmetric variational formulation as follows
\begin{align}\label{eq:fvEddy3dPotential}
  \mathcal{S}(\bmA,V;\bmPsi,\Phi) = \int_{\Omega} \bmJ\cdot \overline{\bmPsi} \d x
    - \dfrac{1}{\mi\omega} \int_{\Omega_{\mathcal{C}}} \bmJ \cdot \grad \overline{\Phi} \d x
  \qquad \forall \,(\bmPsi, \Phi) \in \X,
\end{align}
where the sesquilinear form $  \mathcal{S}$ is defined by:
\begin{equation}\label{eq:LHSfvDerivative3d}
\begin{array}{lll}
  \mathcal{S}(\bmA,V;\bmPsi,\Phi) &:=
    &\displaystyle\int_{\Omega}\left(\dfrac{1}{\mu} \curl \bmA \cdot \curl \overline{\bmPsi}  + \dfrac{1}{\mu_{*}} \divv \bmA \divv \overline{\bmPsi}\right)\d x \\
   &&+ \displaystyle\dfrac{1}{\mi\omega}\int_{\Omega_{\mathcal{C}}} \sigma(\mi\omega \bmA + \grad V)\cdot (\overline{\mi\omega \bmPsi + \grad \Phi}) \d x.
\end{array}
\end{equation}
The coercivity of $\mathcal S $ on   $H^1(\Omega)^3 \times
H^1\left(\Omega_{\mathcal{C}}\right)\slash\C$ (see for instance~\cite[Chp-6]{MR2680968}) ensures the well-posedness of the problem.

	This paper is organized as follows: after this introduction, we state
        in Section \ref{sec:statements} the nonlinear shape optimization
        problem by the introduction of the misfit function, which depends on
        the shape of the defect and in particular its eddy-current signal
        response. We derive, in Section~\ref{sec:adj}, the adjoint problem
        which is based on the shape derivative of the misfit function. At the
        end of this Section we explicitly formulate the shape gradient via the
        adjoint problem. In Section~\ref{sec:num}, we present and explain the
        algorithm of steepest descent based on  the use of fixed predefined
        grid. With
        Section~\ref{sec:exp}, we conclude the paper with numerical experiments
        that illustrate the robustness of the method. Some technical materials
        related to shape derivative are reported in the appendix for the readers' convenience.
	
	We conclude this section with the introduction of some useful notations. We denote by $[\cdot]$ the jump across the interface $\Gamma$:
	$
	\left[ F \right]  = \lim_{t\searrow 0}F(x+t\bmn) - \lim_{t\searrow 0}F(x-t\bmn)\quad \forall x\in\Gamma
	$, we recall here that $\bmn$ denote the normal to $\Gamma$ pointing outside $\Omega_d$. 
For any vector $\bmA$ and differentiable scalar $V$,
we respectively denote the tangential component and the tangential gradient  on
some boundary or interface having a normal $\bmn$ by $\bmA_{\tau} := \bmA - (\bmA\cdot \bmn)\bmn$ and $\grad_{\tau}V := \grad V  - \partial_{\bmn}V\cdot\bmn$.
	We finally shall use the notation
$
  \X(\Omega) :=  \mathcal H(\Omega)\times H^1\left(\Omega_{\mathcal{C}}\right)\slash\C.
$

\section{Statement of the inverse problem}\label{sec:statements}

\subsection{Impedance measurements}
The deposit probing is an operation of scan with two coils introduced inside
the tube along its axis from a vertical position $\zeta_{\min}$ to a vertical
position $\zeta_{\max}$. 
At each position $\zeta \in [\zeta_{\min}, \zeta_{\max}]$, we measure the impedance signal $Z^{}(\zeta)$. According to \cite[(10a)]{auld1999review},
in the full Maxwell's system, the impedance measured in the coil $k$ when the electromagnetic field is induced by the coil $l$ writes
$
  \triangle Z_{kl} = \dfrac{1}{|J|^{2}}\int_{\partial \Omega_{d}}
    (\bmE^{0}_{l}\times \bmH_{k} - \bmE_{k}\times \bmH^{0}_{l})\cdot \bmn \d S,
$
 where $\bmE^{0}_{l}$ and $\bmH^{0}_{l}$ are respectively the electric field and
the magnetic field in the deposit-free case with corresponding permeability and
conductivity distributions $\mu^{0}$, $\sigma^{0}$, while $\bmE_{k}$,
$\bmH_{k}$ are those in the case with deposits.
Using the divergence theorem, we obtain the following volume representation of
the impedances
\begin{align}\label{Chap2_eq:impedance3d}
  \triangle Z_{kl} & = \dfrac{1}{|J|^{2}}\int_{\Omega_{d}}
    \divv(\bmE^{0}_{l}\times \bmH_{k} - \bmE_{k}\times \bmH^{0}_{l}) \d x \notag \\
  & = \dfrac{1}{|J|^{2}}\int_{\Omega_{d}}
    (\curl\bmE^{0}_{l} \cdot\bmH_{k} - \bmE^{0}_{l}\cdot \curl\bmH_{k}
    - \curl\bmE_{k} \cdot\bmH^{0}_{l} + \bmE_{k}\cdot \curl\bmH^{0}_{l}) \d x \notag \\
  & = \dfrac{1}{\mi\omega |J|^{2}}\int_{\Omega_{d}}
    \left(  (\dfrac{1}{\mu}-\dfrac{1}{\mu^{0}} )\curl\bmE_{k}\cdot \curl\bmE^{0}_{l}
    - \mi\omega  (\sigma-\sigma^{0} )\bmE_{k}\cdot\bmE^{0}_{l}\right) \d x.
\end{align}
In the last equality we used the eddy-current
model~\eqref{eq:eddy3d}. Furthermore, using the relation $\bmE  = \mi\omega
\bmA + \grad V$ we replace the electric field $\bmE$ by the vector potential
$\bmA$ and we thus obtain the following shape dependent impedance measurement
formula\footnote{
Let's recall the fact that $\sigma^0$ is an $\overline\epsilon$-conductivity. Hence the electric field $\bmE^0_l$ has a sense with $\bmE^0_l=i\omega\bmA^0_l+\grad V^0_l$.
  }
\begin{align}\label{eq:impedance3d}
  \triangle Z_{kl}(\Omega_{d})\\
    = \dfrac{\mi\omega}{|J|^{2}} \int_{\Omega_{d}} \bigg( & (\dfrac{1}{\mu}-\dfrac{1}{\mu^{0}})\curl \bmA_{k}\cdot\curl\bmA^{0}_{l}
   - \dfrac{1}{\mi\omega} (\sigma-\sigma^{0} )(\mi\omega \bmA_{k}+\grad V_{k})\cdot(\mi\omega \bmA^{0}_{l} + \grad V^{0}_{l}) \bigg) \d x.\nonumber
\end{align}
\subsection{A least squares formulation}
Let us denote by $Z^{\natural}(\Omega_d^\star;\zeta)$ the impedance response signals of a probed deposit
$\Omega_d^\star$ that we would like to estimate.  We shall use
the shape dependent form in the impedance signal response in order to convert
the signal anomaly to a shape perturbation. This inverse problem
will be solved by minimizing a least square misfit function
representing the error between computed and observed signals integrated over
the coil positions. This misfit function is defined as follows:
\begin{align}\label{eq:leastSquare}
\mathbf{f}(\Omega_{d}) = \int_{\zeta_{\min}}^{\zeta_{\max}} |Z(\Omega_{d};\zeta) - Z^{\natural}(\Omega_d^\star;\zeta)|^{2} \d\zeta,
\end{align}
where $Z$ is either $Z_{FA}$ or $Z_{F3}$ according to the measurement mode used
in practice:
  $Z_{FA}(\Omega_{d}) := \dfrac{\mi}{2}(\triangle Z_{11}(\Omega_{d})+\triangle Z_{21}(\Omega_{d}))$,
 or $Z_{F3}(\Omega_{d}):= \dfrac{\mi}{2}(\triangle Z_{11}(\Omega_{d})-\triangle
 Z_{22}(\Omega_{d}))$.
Minimizing this functional using a steepest descent method requires a
characterization of its derivative with respect to perturbations of $\Omega_{d}$. This is the objective of next section. 

\section{Adjoint problem and explicit formulation of the shape gradient}\label{sec:adj}
We shall first study the shape derivative of the solution $(\bmA,V)$ with
respect to deformations of the deposit shape. This derivative will then allow us
to obtain an expression of the cost-functional derivative. A computable version
of this derivative is then derived through the introduction of an adjoint
state. 
\subsection{A preliminary result on the material derivative}

In this part, we formally derive the expression of the material derivative of the solution to the eddy-current model
on a regular open set with constant physical coefficients $\mu$, $\sigma$. This
result will be used in next sections to obtain the material derivative of the
eddy-current model with piecewise constant coefficients as well as the shape
derivative of the impedance measurements. We begin by introducing the shape and
material derivatives \cite[Section 6.3.3]{de2006conception}. For any regular open set $\Omega \subset \R^{3}$, we consider a domain deformation as a perturbation of the identity
$
  \Id + \bmtheta : \Omega  \rightarrow\Omega_{\theta} \notag,
  x  \mapsto y,
$
where $\bmtheta\in \mathcal{C}:=(C^{2}(\R^{3};\R^{3}))^{3}$ is a small perturbation of the domain.
To make a difference between the differential operators before and after the variable substitution,
we denote by $\curl_{x}$, $\divv_{x}$, $\grad_{x}$ the curl, divergence and gradient operators on
$\Omega$ with $x$-coordinates, and respectively by $\curl_{y}$, $\divv_{y}$, $\grad_{y}$ those on
$\Omega_{\theta}$ with $y$-coordinates. For any $(\bmA(\Omega_{\theta}), V(\Omega_{\theta}))$
defined on $\Omega_{\theta}$, we set
  \begin{align*}
  &\bmA_{\curl}(\bmtheta) := (I+\grad\bmtheta)^{t} \bmA(\Omega_{\theta})\circ(\Id + \bmtheta), \\
  &\bmA_{\divv}(\bmtheta) := \det(I+ \grad\bmtheta)(I+ \grad\bmtheta)^{-1} \bmA(\Omega_{\theta})\circ(\Id + \bmtheta), \\
  & V_{\grad}(\bmtheta) := V(\Omega_{\theta})\circ(\Id + \bmtheta).
  \end{align*}
These quantities conserve the corresponding differential operators in the following sense
(see for example \cite[(3.75), Corollary 3.58, Lemma 3.59]{MR2059447})
\begin{equation}\label{eq:DifChangeVariable}
  \begin{array}{lll}
  \dfrac{I+\grad\bmtheta}{\det(I+\grad\bmtheta)}\curl_{x} \bmA_{\curl}(\bmtheta)
    & = (\curl_{y} \bmA(\Omega_{\theta}))\circ (\Id + \bmtheta), \vspace{0.07in}\\
  \dfrac{1}{\det(I+\grad\bmtheta)}\divv_{x} \bmA_{\divv}(\bmtheta)
    & = (\divv_{y} \bmA(\Omega_{\theta}))\circ(\Id + \bmtheta), \vspace{0.07in}\\
  (I+\grad\bmtheta)^{-t} \grad_{x}V_{\grad}(\bmtheta)
    & = (\grad_{y} V(\Omega_{\theta}))\circ(\Id + \bmtheta),
  \end{array}
\end{equation}
where $\grad\bmtheta := (\dfrac{\partial \theta_{i}}{\partial x_{j}})_{i,j}$ is the Jacobian matrix.
\\	
In order to simplify the notation we use $\curl$, $\divv$ and $\grad$
        for respectively $\curl_{x}$, $\divv_{x}$ and $\grad_{x}$.
\\		
Let $(\bmA(\Omega), V(\Omega))$ be some shape-dependent functions that belong to some Banach space $\mathcal{W}(\Omega)$,\
  and $\bmtheta \in \mathcal C$ a shape perturbation.
  The material derivatives $(\bmB(\bmtheta),U(\bmtheta))$ of $(\bmA,V)$,
  if they exist, are defined as
\begin{equation}\label{defi:derivatives3d}
  \begin{cases}
    \bmA_{\curl}(\bmtheta)  & = \bmA_{\curl}(0) + \bmB(\bmtheta) + o(\bmtheta)
      = \bmA(\Omega) + \bmB(\bmtheta) + o(\bmtheta),\\ 
    V_{\grad}(\bmtheta)  & = V_{\grad}(0) + U(\bmtheta) + o(\bmtheta)
      = V(\Omega) + U(\bmtheta) + o(\bmtheta), \\ 
  \end{cases}\end{equation}
 We also define the shape derivatives $(\bmA'(\bmtheta),V'(\bmtheta))$ of $(\bmA,V)$ by
\begin{equation}\label{eq:dAdVmaterial}
 \begin{cases}
    \bmA'(\bmtheta) & := \bmB(\bmtheta) - (\bmtheta\cdot\grad)\bmA(\Omega) - (\grad\bmtheta)^{t}\bmA(\Omega),\\
    V'(\bmtheta) & := U(\bmtheta) - \bmtheta\cdot \grad V(\Omega).
  \end{cases}\end{equation}
  The derivative $\bmB_{\divv}(\bmtheta)$ of $\bmA$ which conserve the divergence operator is given by
  \begin{align}
    \bmB_{\divv}(\bmtheta) & := \bmB(\bmtheta) + (\divv\bmtheta I - \grad\bmtheta - (\grad\bmtheta)^{t}) \bmA(\Omega).
      \label{eq:dAdivv_dAmaterial}
  \end{align}
  Using the chain rule, in any open set of 
  $\Omega\cap\Omega_{\theta}$ we formally have
  \begin{align}
    \bmA(\Omega_{\theta}) & = \bmA(\Omega) + \bmA'(\bmtheta) + o(\bmtheta), \label{eq:dAshape}\\
    \bmA_{\divv}(\bmtheta) & = \bmA(\Omega) + \bmB_{\divv}(\bmtheta) + o(\bmtheta), \label{eq:dAdivv} \\
    V(\Omega_{\theta})      & = V(\Omega) + V'(\bmtheta) + o(\bmtheta). \label{eq:dVshape}
  \end{align}
To ease further discussions, in particular the derivation of the variational formulation
\eqref{eq:fvDerivative3dBis} from \eqref{eq:fvDerivative3d}, we give a preliminary result.
Assume that the coefficients $\mu$ and $\sigma$ are constant on $\Omega$.
We set a shape-dependent form
\begin{align}\label{eq:formAlpha3d}
  \sfa(\Omega)\big(\bmA,V;\bmPsi,\Phi \big)
    := \int_{\Omega}\dfrac{1}{\mu}\curl \bmA \cdot \curl \overline{\bmPsi} \d x
    + \dfrac{1}{\mi\omega}\int_{\Omega} \sigma(\mi\omega\bmA + \grad V)\cdot (\overline{\mi\omega\bmPsi + \grad\Phi}) \d x.
\end{align}
Compared to the variational form $\mathcal{S}$ defined in \eqref{eq:fvEddy3dPotential},
the above form $\sfa(\Omega)$ get rid of the penalization term
$\int_{\Omega}(\mu^{*})^{-1}\divv\bmA\divv\overline{\bmPsi}\d x$.
\begin{lemma}\label{Chap5_lemm:compute3d}
  Let $\Omega$ be a regular open set, $\mu>0$ and $\sigma\ge 0$ constant on $\Omega$ and
  $\Id + \bmtheta: \Omega \rightarrow \Omega_{\theta}$ a given deformation.
  Let $(\bmA,V) = (\bmA(\Omega), V(\Omega))$ and $(\bmPsi, \Phi) = (\bmPsi(\Omega), \Phi(\Omega))$
  be some shape-dependent functions with sufficient regularity.
  We assume that the material derivatives $(\bmB(\bmtheta), U(\bmtheta))$ of $(\bmA, V)$,
  the shape derivatives $(\bmA'(\bmtheta), V'(\bmtheta))$ of $(\bmA, V)$
  and the material derivatives $(\bmeta(\bmtheta), \chi(\bmtheta))$ of $(\bmPsi, \Phi)$
  defined with \eqref{defi:derivatives3d} exist. If $(\bmA(\Omega), V(\Omega))$
  satisfy in the weak sense
  \begin{equation}\label{eq:lemmCompute}
    \begin{cases}
      \,\curl(\mu^{-1}\curl \bmA) - \sigma(\mi\omega\bmA + \grad V)  = 0 & \text{in }\Omega,\\
      \divv \bmA = 0 & \text{in }\Omega,\\
      \sigma(\mi\omega\bmA + \grad V)\cdot \bmn = 0 & \text{on }\partial\Omega,\\
    \end{cases}
  \end{equation}
  then the shape derivative of $\sfa(\Omega)$ that we denote by $\sfaprime(\Omega)(\bmtheta)$, i.e.
$
    \sfa(\Omega_{\theta})\big(\bmA, V;\bmPsi,\Phi\big) 
      = \sfa(\Omega)\big(\bmA, V;\bmPsi,\Phi\big)
      + \sfaprime(\Omega)(\bmtheta)\big(\bmA, V;\bmPsi,\Phi\big) + o(\bmtheta),
$
 satisfies
  \begin{equation}\label{eq:alphaDerivative}
  \begin{array}{ll}
     \sfaprime(\Omega)(\bmtheta)\big(\bmA,V;\bmPsi,\Phi\big)
      =& \sfa(\Omega)\big(\bmA'(\bmtheta),V'(\bmtheta);\bmPsi,\Phi\big)
      + \sfa(\Omega)\big(\bmA,V;\bmeta(\bmtheta),\chi(\bmtheta)\big)  \\
    & + \displaystyle\int_{\partial\Omega}\dfrac{1}{\mu}(\bmtheta\cdot \curl\bmA)(\bmn\cdot \curl\overline{\bmPsi}) \d s\\
    & + \displaystyle\dfrac{1}{\mi\omega}\int_{\partial\Omega} \sigma(\bmn\cdot\bmtheta)(\mi\omega\bmA_{\tau} + \grad_{\tau} V)
      \cdot(\overline{\mi\omega\bmPsi_{\tau}+\grad_{\tau} \Phi}) \d s.
  \end{array}\end{equation}
\end{lemma}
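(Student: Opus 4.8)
The plan is to transport $\sfa(\Omega_\theta)$ back to the fixed domain $\Omega$ and then Taylor expand in $\bmtheta$. Writing $M:=I+\grad\bmtheta$ and $j:=\det M$, the change of variables $y=(\Id+\bmtheta)(x)$ (with $\d y=j\,\d x$) together with the operator-conservation identities \eqref{eq:DifChangeVariable} replaces $(\curl_y\bmA(\Omega_\theta))\circ(\Id+\bmtheta)$ by $j^{-1}M\,\curl\bmA_{\curl}(\bmtheta)$ and $(\grad_yV(\Omega_\theta))\circ(\Id+\bmtheta)$ by $M^{-t}\grad V_{\grad}(\bmtheta)$, and similarly for the test pair. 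After this substitution $\sfa(\Omega_\theta)(\bmA,V;\bmPsi,\Phi)$ becomes an integral over the fixed $\Omega$ in which the geometry enters only through the matrix weight $j^{-1}M^{t}M$ multiplying the curl pairing and the matrix weight $jM^{-1}M^{-t}$ multiplying the conductivity pairing, acting on the transported fields $\bmA_{\curl}(\bmtheta)$, $V_{\grad}(\bmtheta)$ and their test analogues.

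Next I would expand to first order. From $j=1+\divv\bmtheta+o(\bmtheta)$ and $M^{-1}=I-\grad\bmtheta+o(\bmtheta)$ one finds $j^{-1}M^{t}M=I+G+o(\bmtheta)$ and $jM^{-1}M^{-t}=I-G+o(\bmtheta)$ with the \emph{single} tensor $G:=\grad\bmtheta+(\grad\bmtheta)^{t}-\divv\bmtheta\,I$. Inserting this together with the material-derivative expansions \eqref{defi:derivatives3d} of the four fields, $\sfaprime(\Omega)(\bmtheta)$ splits into a field part $\sfa(\Omega)(\bmB,U;\bmPsi,\Phi)+\sfa(\Omega)(\bmA,V;\bmeta,\chi)$ and the geometric volume part
\begin{equation*}
\frac1\mu\int_\Omega \curl\bmA\cdot G\,\curl\overline{\bmPsi}\,\d x-\frac{1}{\mi\omega}\int_\Omega\sigma(\mi\omega\bmA+\grad V)\cdot G\,(\overline{\mi\omega\bmPsi+\grad\Phi})\,\d x.
\end{equation*}
The term $\sfa(\Omega)(\bmA,V;\bmeta,\chi)$ already appears in \eqref{eq:alphaDerivative}, so it may be set aside; note that the test pair keeps its \emph{material} derivative $(\bmeta,\chi)$, an asymmetry that the remaining computation must preserve.

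It then remains to convert the trial contribution $\sfa(\Omega)(\bmB,U;\bmPsi,\Phi)$ into the target term $\sfa(\Omega)(\bmA'(\bmtheta),V'(\bmtheta);\bmPsi,\Phi)$. Using the shape-versus-material relations \eqref{eq:dAdVmaterial}, I would substitute $\bmB=\bmA'(\bmtheta)+(\bmtheta\cdot\grad)\bmA+(\grad\bmtheta)^{t}\bmA$ and $U=V'(\bmtheta)+\bmtheta\cdot\grad V$. This produces exactly $\sfa(\Omega)(\bmA'(\bmtheta),V'(\bmtheta);\bmPsi,\Phi)$ plus transport volume integrals whose trial slot carries the convective fields $(\bmtheta\cdot\grad)\bmA+(\grad\bmtheta)^{t}\bmA$ and $\bmtheta\cdot\grad V$. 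The crux of the proof is then the identity asserting that these transport integrals, added to the geometric volume part above, collapse to precisely the two surface integrals of \eqref{eq:alphaDerivative}.

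To establish this I would use the Lie-derivative (Piola-type) vector identities that express $\curl\big((\bmtheta\cdot\grad)\bmA+(\grad\bmtheta)^{t}\bmA\big)$ and the corresponding gradient transport in terms of $\curl\bmA$, $\grad V$ and $G$; after these substitutions the combined bulk integrand can be written as a divergence plus terms proportional to the field equations. Integrating the divergence by parts yields the boundary integrals, while the strong form \eqref{eq:lemmCompute} (namely $\curl(\mu^{-1}\curl\bmA)=\sigma(\mi\omega\bmA+\grad V)$ and $\divv\bmA=0$) annihilates every remaining interior term. Finally the boundary condition $\sigma(\mi\omega\bmA+\grad V)\cdot\bmn=0$ on $\partial\Omega$ removes the normal components in the conductivity boundary density, leaving the tangential form $(\mi\omega\bmA_{\tau}+\grad_{\tau}V)\cdot(\overline{\mi\omega\bmPsi_{\tau}+\grad_{\tau}\Phi})$ weighted by $\bmn\cdot\bmtheta$, whereas the curl boundary density reduces to $(\bmtheta\cdot\curl\bmA)(\bmn\cdot\curl\overline{\bmPsi})$. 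I expect this last reduction to be the main obstacle: it requires careful bookkeeping of the many integration-by-parts contributions, the correct tensor identities for $G$, and consistent use of the equations of motion to cancel each surviving interior integral.
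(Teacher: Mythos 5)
Your plan reproduces the paper's proof step for step: pull back to $\Omega$ via \eqref{eq:DifChangeVariable}, expand $|\det(I+\grad\bmtheta)|$ and $(I+\grad\bmtheta)^{-1}$ to first order (your tensor $G$ is exactly the matrix appearing in the paper's $\mathcal{I}_{1}$ and $\mathcal{I}_{2}$ in \eqref{eq:middleI1I2}), trade the material derivative $\bmB(\bmtheta)$ for the shape derivative $\bmA'(\bmtheta)$ via \eqref{eq:dAdVmaterial}, and reduce the residual volume terms to the two boundary integrals using the transport identities, integration by parts, the equations \eqref{eq:lemmCompute} and the boundary condition. The only part you leave unexecuted is the ``careful bookkeeping'' itself --- essentially the identity $(-\divv\bmtheta\, I + \grad\bmtheta + (\grad\bmtheta)^{t})\curl \bmA = - \curl((\bmtheta\cdot\grad)\bmA + (\grad\bmtheta)^{t}\bmA) + \grad(\bmtheta\cdot\curl\bmA) + \mu\sigma(\mi\omega\bmA + \grad V)\times \bmtheta$ and the subsequent integrations by parts --- but that is precisely the computation carried out in the paper's appendix, so the route you describe does close.
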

The proof of this Lemma is given in the Appendix.
\subsection{Material derivative of the solution to the eddy-current problem}
In this part, we show the existence of the material derivative of the solution to the eddy-current problem
with respect to a domain variation,
and give its weak formulation with a right hand side in the form of some boundary integrals.
We rewrite the variational formulation of the eddy-current model \eqref{eq:fvEddy3dPotential} on $\Omega_{\theta}$. For any test functions $(\bmPsi, \Phi) \in \X$
\begin{equation}\label{fvEddy3dPotentialTheta}
\begin{array}{ll}
   \mathcal{S}(\bmA(\Omega_{\theta}), V(\Omega_{\theta});\bmPsi(\Omega_{\theta}),\Phi(\Omega_{\theta}))
  & = \displaystyle\int_{\Omega_{\theta}} \left(\dfrac{1}{\mu} \curl_{y} \bmA \cdot \curl_{y}\overline{\bmPsi}
    + \dfrac{1}{\mu_{*}} \divv_{y} \bmA \divv_{y} \overline{\bmPsi}\right) \d y\\
   &\qquad + \dfrac{1}{\mi\omega}\displaystyle\int_{\Omega_{\mathcal{C}\theta}} \sigma(\mi\omega \bmA + \grad V)\cdot (\overline{\mi\omega \bmPsi + \grad \Phi}) \d y\\
  & = \displaystyle\int_{\Omega_{\theta}} \bmJ\cdot \overline{\bmPsi} \d y - \dfrac{1}{\mi\omega} \displaystyle\int_{\Omega_{\mathcal{C}\theta}} \bmJ \cdot \grad \overline{\Phi} \d y.
\end{array}
\end{equation}
We choose the test functions as follows (so that their material derivatives vanish)
$
  \bmvarPsi = (I+\grad\bmtheta)^{t} \bmPsi(\Omega_{\theta})\circ(\Id + \bmtheta),
  \varPhi = \Phi(\Omega_{\theta})\circ(\Id + \bmtheta).
$
Since the supports of $\bmJ$ and $\bmtheta$ are disjoint, i.e.
$\supp (\bmJ) \cap \supp(\bmtheta) = \emptyset$,
the right-hand side of the weak formulation \eqref{fvEddy3dPotentialTheta} writes simply:
\begin{align}\label{eq:fvEddy3dRHS}
  \int_{\Omega} \bmJ\cdot \overline{\bmvarPsi} \d x
  - \dfrac{1}{\mi\omega} \int_{\Omega_{\mathcal{C}}} \bmJ \cdot \grad \overline{\varPhi} \d x.
\end{align}
We consider the following term which conserves the divergence operator
\begin{align*}
  & \bmPsi_{\divv}(\bmtheta) := \det(I+ \grad\bmtheta)(I+ \grad\bmtheta)^{-1}(I+ \grad\bmtheta)^{-t} \bmvarPsi
  \\ &\hspace{1.5cm} = \det(I+ \grad\bmtheta)(I+ \grad\bmtheta)^{-1} \bmPsi(\Omega_{\theta})\circ(\Id + \bmtheta), \notag \\
  & \divv \bmPsi_{\divv}(\bmtheta)
    = \det(I+\grad\bmtheta)\big(\divv_{y} \bmPsi(\Omega_{\theta})\big)\circ(\Id + \bmtheta).
\end{align*}
By variable substitution $y=(\Id+\bmtheta)x$, the left hand side of
\eqref{fvEddy3dPotentialTheta} can be written as
\begin{equation}\label{eq:fvEddy3dLHS}
\begin{array}{ll}
  & \displaystyle\int_{\Omega} \bigg(\dfrac{1}{\mu}\dfrac{(I+\grad\bmtheta)^{t}(I+\grad\bmtheta)}{|\det(I+\grad\bmtheta)|}
      \curl\bmA_{\curl} \cdot \curl \overline{\bmvarPsi}
    + \dfrac{1}{\mu_{*}}\dfrac{1}{|\det(I+\grad\bmtheta)|}
      \divv \bmA_{\divv} \divv \overline{\bmPsi_{\divv}} \bigg)\d x \\
  &+ \dfrac{1}{\mi\omega}\displaystyle\int_{\Omega_{\mathcal{C}}}\sigma |\det(I+\grad\bmtheta)|(I+\grad\bmtheta)^{-1}(I+\grad\bmtheta)^{-t}
     \big(\mi\omega\bmA_{\curl} + \grad V_{\grad}\big)
     \cdot \big(\overline{\mi\omega\bmvarPsi + \grad\varPhi}\big) \d x.
\end{array}
\end{equation}

\begin{theorem}\label{theo:shapeContinuity}
  Let $\bmtheta\in \mathcal C$ a domain perturbation.
  Let $\mu > 0$, $\sigma \ge 0$ belong to $L^{\infty}(\Omega)$. We recall that
  $\bmJ \in L^{2}(\Omega)^{3}$ has compact support in $\Omega_{s} \subset \Omega_{v}$
  and satisfies $\divv \bmJ = 0$ in $\Omega_{s}$ and $\supp (\bmJ) \cap \supp(\bmtheta) = \emptyset$.
  If $(\bmA(\Omega),V(\Omega)) =(\bmA_{\curl}(0),V_{\grad}(0))$ is the solution to the eddy-current problem
  \eqref{eq:fvEddy3dPotential} and $(\bmA(\Omega_{\bmtheta}), V(\Omega_{\bmtheta}))
  = \big((I+\grad\bmtheta)^{-t}\bmA_{\curl}(\bmtheta)\circ(\Id+\bmtheta)^{-1}, V_{\grad}(\bmtheta)\circ(\Id+\bmtheta)^{-1}\big)$
  the solution to the problem \eqref{fvEddy3dPotentialTheta}, then
  \begin{align*}
    \lim_{\|\bmtheta\|_{\mathcal{C}} \to 0}
      \Big\|\big(\bmA_{\curl}(\bmtheta) - \bmA_{\curl}(0),
      V_{\grad}(\bmtheta)-V_{\grad}(0)\big)\Big\|_{\X} = 0.
  \end{align*}
\end{theorem}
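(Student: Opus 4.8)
The plan is to transport everything onto the fixed reference domain $\Omega$ and to read the perturbed problem as a coefficient perturbation of the unperturbed one on the single Hilbert space $\X(\Omega)$. After the change of variables $y=(\Id+\bmtheta)x$, the weak formulation \eqref{fvEddy3dPotentialTheta} becomes, via \eqref{eq:fvEddy3dLHS}--\eqref{eq:fvEddy3dRHS}, a variational problem whose unknown $u_{\bmtheta}:=(\bmA_{\curl}(\bmtheta),V_{\grad}(\bmtheta))$ and whose test pair $(\bmvarPsi,\varPhi)$ both lie in $\X(\Omega)$. Denoting by $\mathcal{S}_{\bmtheta}$ the sesquilinear form given by the left-hand side of \eqref{eq:fvEddy3dLHS} and by $\mathcal{S}$ the unperturbed form \eqref{eq:LHSfvDerivative3d}, the two problems read $\mathcal{S}_{\bmtheta}(u_{\bmtheta};v)=L(v)$ and $\mathcal{S}(u_{0};v)=L(v)$ for all $v\in\X(\Omega)$, where $u_{0}=(\bmA_{\curl}(0),V_{\grad}(0))=(\bmA(\Omega),V(\Omega))$. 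The crucial point is that the right-hand side $L$ is the \emph{same} in both problems: since $\supp(\bmJ)\cap\supp(\bmtheta)=\emptyset$ we have $\bmtheta=0$ and $\grad\bmtheta=0$ on a neighbourhood of $\supp(\bmJ)$, so the transported source \eqref{eq:fvEddy3dRHS} coincides with the source of \eqref{eq:fvEddy3dPotential}. I would therefore treat the statement as a Strang-type (C\'ea) estimate for a family of forms converging to a coercive limit.

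The two analytic ingredients are then (i) operator-norm convergence $\mathcal{S}_{\bmtheta}\to\mathcal{S}$ and (ii) uniform coercivity of $\mathcal{S}_{\bmtheta}$. For (i), all coefficients appearing in \eqref{eq:fvEddy3dLHS}---namely $(I+\grad\bmtheta)^{t}(I+\grad\bmtheta)/|\det(I+\grad\bmtheta)|$ in the curl term, $|\det(I+\grad\bmtheta)|^{-1}$ in the penalty term, and $|\det(I+\grad\bmtheta)|(I+\grad\bmtheta)^{-1}(I+\grad\bmtheta)^{-t}$ in the conductivity term---are smooth functions of $\grad\bmtheta$ tending to the identity (resp. to $1$) uniformly as $\|\bmtheta\|_{\mathcal{C}}\to 0$, because the $C^{2}$-norm controls $\grad\bmtheta$ in $C^{1}$. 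Writing $\bmA_{\divv}(\bmtheta)=M(\bmtheta)\bmA_{\curl}(\bmtheta)$ with $M(\bmtheta):=\det(I+\grad\bmtheta)(I+\grad\bmtheta)^{-1}(I+\grad\bmtheta)^{-t}\to I$ in $C^{1}$, one estimates $|\mathcal{S}_{\bmtheta}(u;v)-\mathcal{S}(u;v)|\le c(\bmtheta)\,\|u\|_{\X}\|v\|_{\X}$ with $c(\bmtheta)\to 0$. For (ii), since $\mathcal{S}$ is coercive on $\X(\Omega)$ with some constant $\alpha>0$ (as recalled after \eqref{eq:LHSfvDerivative3d}), the bound $|\mathcal{S}_{\bmtheta}(u;u)|\ge |\mathcal{S}(u;u)|-c(\bmtheta)\|u\|_{\X}^{2}\ge(\alpha-c(\bmtheta))\|u\|_{\X}^{2}\ge\tfrac{\alpha}{2}\|u\|_{\X}^{2}$ holds once $\|\bmtheta\|_{\mathcal{C}}$ is small; testing $\mathcal{S}_{\bmtheta}(u_{\bmtheta};u_{\bmtheta})=L(u_{\bmtheta})$ and using boundedness of $L$ then yields a uniform bound $\|u_{\bmtheta}\|_{\X}\le C$.

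With these in hand the conclusion is immediate: by coercivity (hence bounded invertibility, i.e. an inf--sup bound with constant $\alpha$) of $\mathcal{S}$, for every $v$ one has $\mathcal{S}(u_{\bmtheta}-u_{0};v)=\mathcal{S}(u_{\bmtheta};v)-L(v)=(\mathcal{S}-\mathcal{S}_{\bmtheta})(u_{\bmtheta};v)$, so that $\|u_{\bmtheta}-u_{0}\|_{\X}\le\alpha^{-1}\,c(\bmtheta)\,\|u_{\bmtheta}\|_{\X}\le\alpha^{-1}C\,c(\bmtheta)\to 0$, which is exactly the claimed limit. I expect the only genuine difficulty to lie in the divergence/penalty term of step (i): there the factor $M(\bmtheta)$ sits \emph{inside} the divergence (for both trial and test functions, through $\bmA_{\divv}$ and $\bmPsi_{\divv}$), so $\divv\big(M(\bmtheta)\bmA_{\curl}\big)-\divv\bmA_{\curl}$ must be estimated in $L^{2}$, which brings in $\grad\bmA_{\curl}$ and the derivatives of $M(\bmtheta)$. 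This is controlled precisely because $\bmtheta\in C^{2}$ gives $M(\bmtheta)\to I$ in $C^{1}$ and because $\Omega$ is convex and regular, so that $\mathcal{H}(\Omega)$ embeds in $H^{1}(\Omega)^{3}$ and the full gradient of $\bmA_{\curl}$ is dominated by $\|\bmA_{\curl}\|_{\X}$; care must also be taken that the estimates respect the quotient structure of $H^{1}(\Omega_{\mathcal{C}})\slash\C$, which is harmless since $\mathcal{S}_{\bmtheta}$ and $L$ involve $V$ only through $\grad V$.
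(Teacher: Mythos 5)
Your proposal is correct and follows essentially the same route as the paper: pull the perturbed problem back to the fixed domain $\Omega$, observe that the right-hand side is unchanged because $\supp(\bmJ)\cap\supp(\bmtheta)=\emptyset$, bound $\mathcal{S}\big(u_{\bmtheta}-u_{0};\cdot\big)$ by the difference of the transported and unperturbed forms applied to $u_{\bmtheta}$, and conclude by coercivity of $\mathcal{S}$. The only real difference is presentational: the paper expands the coefficients to first order in $\bmtheta$ (equation \eqref{eq:thetaDevelop}) and leaves the uniform bound on $u_{\bmtheta}$ implicit in its ``obviously the right hand side goes to zero'', whereas you work with the exact coefficient difference and justify that bound explicitly via uniform coercivity of $\mathcal{S}_{\bmtheta}$, which is a slightly more careful rendering of the same argument.
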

\begin{proof}
  We recall that $\bmA(\Omega)$ and $\bmA(\Omega_{\theta})$ satisfy the Coulomb gauge condition
  on $\Omega$ and on $\Omega_{\theta}$ respectively: $\divv \bmA(\Omega) = 0$ on $\Omega$,
  $\divv_{y} \bmA(\Omega_{\theta}) = 0$ on $\Omega_{\theta}$.
  From the weak formulations \eqref{eq:fvEddy3dPotential}, \eqref{fvEddy3dPotentialTheta}
  the identities \eqref{eq:fvEddy3dRHS}, \eqref{eq:fvEddy3dLHS}
  and the developments in \eqref{eq:thetaDevelop} we obtain
  \begin{equation}\label{eq:shapeContinuity}
  \begin{array}{ll}
    & \mathcal{S}\Big(\bmA_{\curl}(\bmtheta) - \bmA_{\curl}(0), V_{\grad}(\bmtheta)-V_{\grad}(0);\bmvarPsi, \varPhi\Big) \\
    & = \displaystyle\int_{\Omega} \dfrac{1}{\mu}(\divv\bmtheta I - \grad\bmtheta -(\grad\bmtheta)^{t})
      \curl \bmA_{\curl}(\bmtheta) \cdot \curl\overline{\bmvarPsi} \d x  \\
    & \quad + \dfrac{1}{\mi\omega}\displaystyle\int_{\Omega_{\mathcal{C}}} \sigma
      (-\divv\bmtheta I + \grad\bmtheta + (\grad\bmtheta)^{t})(\mi\omega \bmA_{\curl}(\bmtheta)+\grad V_{\grad}(\bmtheta))
      \cdot(\overline{\mi\omega\bmvarPsi + \grad\varPhi})\d x
      + o(\bmtheta).
  \end{array}
  \end{equation}
  Obviously the right hand side of the above equality goes to zero as $\|\bmtheta\|_{\mathcal{C}} \to 0$.
  Since the form $\mathcal{S}$ is coercive (see \cite[Section 6.1.2]{MR2680968}),
  this implies  $\big\|\big(\bmA_{\curl}(\bmtheta) - \bmA_{\curl}(0),
  V_{\grad}(\bmtheta)-V_{\grad}(0)\big)\big\|_{\X} \to 0$
  as $\|\bmtheta\|_{\mathcal C} \to 0$.
\end{proof}

\begin{theorem}\label{theo:differentiability}
  Under the same assumptions as in Theorem \ref{theo:shapeContinuity},
  the material derivative of the solution $(\bmA(\Omega), V(\Omega))$ to the eddy-current problem
  \eqref{eq:fvEddy3dPotential} with respect to a domain variation $\Id+\bmtheta$ exists.
  If it is denoted by $(\bmB(\bmtheta),U(\bmtheta))$, then
  \begin{align*}
    \lim_{\|\bmtheta\|_{\mathcal C}\to 0} \dfrac{1}{\|\bmtheta\|_{\mathcal C}}
      \Big\|\big(\bmA_{\curl}(\bmtheta) - \bmA_{\curl}(0) - \bmB(\bmtheta),
      V_{\grad}(\bmtheta)-V_{\grad}(0)-U(\bmtheta)\big)\Big\|_{\X} = 0.
  \end{align*}
\end{theorem}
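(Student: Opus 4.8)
The plan is to build the candidate material derivative by Lax--Milgram and then to promote the continuity estimate of Theorem~\ref{theo:shapeContinuity} into the desired first-order estimate; the coercivity of $\mathcal{S}$ on $\X$ is the engine in both steps. Write $\mathcal{R}_{\bmtheta}(\bmvarPsi,\varPhi)$ for the right-hand side of \eqref{eq:shapeContinuity}, which depends on the (unknown) perturbed state $(\bmA_{\curl}(\bmtheta),V_{\grad}(\bmtheta))$. First I would \emph{freeze} that state at its value $(\bmA(\Omega),V(\Omega))=(\bmA_{\curl}(0),V_{\grad}(0))$ at $\bmtheta=0$ and drop the $o(\bmtheta)$ remainder, defining the antilinear functional
\begin{equation*}
\begin{array}{ll}
\ell_{\bmtheta}(\bmvarPsi,\varPhi):=&\displaystyle\int_{\Omega}\dfrac{1}{\mu}(\divv\bmtheta\,I-\grad\bmtheta-(\grad\bmtheta)^{t})\curl\bmA(\Omega)\cdot\curl\overline{\bmvarPsi}\d x\\
&+\displaystyle\dfrac{1}{\mi\omega}\int_{\Omega_{\mathcal{C}}}\sigma(-\divv\bmtheta\,I+\grad\bmtheta+(\grad\bmtheta)^{t})(\mi\omega\bmA(\Omega)+\grad V(\Omega))\cdot(\overline{\mi\omega\bmvarPsi+\grad\varPhi})\d x.
\end{array}
\end{equation*}
Since $(\bmA(\Omega),V(\Omega))$ is fixed and $\grad\bmtheta,\divv\bmtheta$ are controlled by $\|\bmtheta\|_{\mathcal{C}}$, this is a bounded functional on $\X$ with $\|\ell_{\bmtheta}\|\le C\|\bmtheta\|_{\mathcal{C}}$; the coercivity of $\mathcal{S}$ then provides a unique $(\bmB(\bmtheta),U(\bmtheta))\in\X$ with $\mathcal{S}(\bmB(\bmtheta),U(\bmtheta);\bmvarPsi,\varPhi)=\ell_{\bmtheta}(\bmvarPsi,\varPhi)$ for all $(\bmvarPsi,\varPhi)\in\X$, depending linearly on $\bmtheta$ and satisfying $\|(\bmB(\bmtheta),U(\bmtheta))\|_{\X}\le C\|\bmtheta\|_{\mathcal{C}}$. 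This is the candidate material derivative.

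Next I would set $w_{\bmtheta}:=\big(\bmA_{\curl}(\bmtheta)-\bmA_{\curl}(0)-\bmB(\bmtheta),\,V_{\grad}(\bmtheta)-V_{\grad}(0)-U(\bmtheta)\big)$. By linearity of $\mathcal{S}$ in its first pair of arguments, identity \eqref{eq:shapeContinuity} and the defining relation for $(\bmB(\bmtheta),U(\bmtheta))$ give, for every $(\bmvarPsi,\varPhi)\in\X$,
\begin{equation*}
\mathcal{S}(w_{\bmtheta};\bmvarPsi,\varPhi)=\mathcal{R}_{\bmtheta}(\bmvarPsi,\varPhi)-\ell_{\bmtheta}(\bmvarPsi,\varPhi).
\end{equation*}
The crucial observation is that $\mathcal{R}_{\bmtheta}-\ell_{\bmtheta}$ only contains $\curl\big(\bmA_{\curl}(\bmtheta)-\bmA_{\curl}(0)\big)$ and $\mi\omega\big(\bmA_{\curl}(\bmtheta)-\bmA_{\curl}(0)\big)+\grad\big(V_{\grad}(\bmtheta)-V_{\grad}(0)\big)$ contracted against the bounded matrix $(\divv\bmtheta\,I-\grad\bmtheta-(\grad\bmtheta)^{t})$, together with the $o(\bmtheta)$ term; hence
\begin{equation*}
\big|\mathcal{R}_{\bmtheta}(\bmvarPsi,\varPhi)-\ell_{\bmtheta}(\bmvarPsi,\varPhi)\big|\le C\,\|\bmtheta\|_{\mathcal{C}}\,\big\|\big(\bmA_{\curl}(\bmtheta)-\bmA_{\curl}(0),\,V_{\grad}(\bmtheta)-V_{\grad}(0)\big)\big\|_{\X}\,\|(\bmvarPsi,\varPhi)\|_{\X}+o(\bmtheta).
\end{equation*}

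Here Theorem~\ref{theo:shapeContinuity} does the decisive work: it forces $\big\|\big(\bmA_{\curl}(\bmtheta)-\bmA_{\curl}(0),V_{\grad}(\bmtheta)-V_{\grad}(0)\big)\big\|_{\X}=o(1)$, so the leading factor $\|\bmtheta\|_{\mathcal{C}}\cdot o(1)$ is $o(\|\bmtheta\|_{\mathcal{C}})$, uniformly over the unit ball of $\X$. Thus $\mathcal{S}(w_{\bmtheta};\bmvarPsi,\varPhi)=o(\|\bmtheta\|_{\mathcal{C}})\,\|(\bmvarPsi,\varPhi)\|_{\X}$. Choosing $(\bmvarPsi,\varPhi)=w_{\bmtheta}$ and using the coercivity $|\mathcal{S}(w;w)|\ge\alpha\|w\|_{\X}^{2}$ gives $\alpha\|w_{\bmtheta}\|_{\X}^{2}\le o(\|\bmtheta\|_{\mathcal{C}})\,\|w_{\bmtheta}\|_{\X}$, whence $\|w_{\bmtheta}\|_{\X}=o(\|\bmtheta\|_{\mathcal{C}})$, i.e.\ the stated limit. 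Since $(\bmB(\bmtheta),U(\bmtheta))$ is linear in $\bmtheta$, it is indeed the material derivative in the sense of \eqref{defi:derivatives3d}.

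The main obstacle is to make the remainder genuinely first order and uniform in the test function. Concretely, one has to check that the $o(\bmtheta)$ in \eqref{eq:shapeContinuity}---which gathers the quadratic and higher order terms in the Taylor expansions of the pulled-back coefficient matrices ${(I+\grad\bmtheta)^{t}(I+\grad\bmtheta)}/{|\det(I+\grad\bmtheta)|}$ and $|\det(I+\grad\bmtheta)|(I+\grad\bmtheta)^{-1}(I+\grad\bmtheta)^{-t}$---is truly $o(\|\bmtheta\|_{\mathcal{C}})$ once paired with $\curl\bmA_{\curl}(\bmtheta)$ and $\mi\omega\bmA_{\curl}(\bmtheta)+\grad V_{\grad}(\bmtheta)$. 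This is where the $\mathcal{C}=(C^{2}(\R^{3};\R^{3}))^{3}$ regularity of $\bmtheta$ enters, bounding those matrix remainders in $L^{\infty}$ by $C\|\bmtheta\|_{\mathcal{C}}^{2}$, and where the $\X$-boundedness of $(\bmA_{\curl}(\bmtheta),V_{\grad}(\bmtheta))$ (itself a by-product of Theorem~\ref{theo:shapeContinuity}) is used. It is precisely this structure that explains why continuity had to be established first: without the $o(1)$ factor supplied by Theorem~\ref{theo:shapeContinuity}, the gap $\mathcal{R}_{\bmtheta}-\ell_{\bmtheta}$ would only be $O(\|\bmtheta\|_{\mathcal{C}})$ and the difference quotient could not be shown to vanish.
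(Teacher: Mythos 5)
Your overall strategy is exactly the paper's: define the candidate $(\bmB(\bmtheta),U(\bmtheta))$ by Lax--Milgram with a linearized right-hand side, subtract it from the identity \eqref{eq:shapeContinuity}, observe that the residual pairs $(\divv\bmtheta\,I-\grad\bmtheta-(\grad\bmtheta)^{t})$ against the \emph{difference} of states, invoke Theorem~\ref{theo:shapeContinuity} to upgrade $O(\|\bmtheta\|_{\mathcal C})$ to $o(\|\bmtheta\|_{\mathcal C})$, and conclude by coercivity. There is, however, one concrete gap: your $\ell_{\bmtheta}$ omits the divergence-penalization contribution, and that omission is first order, not higher order. The form $\mathcal{S}$ contains $\mu_{*}^{-1}\int_{\Omega}\divv(\cdot)\,\divv\overline{\bmvarPsi}\d x$, and although $\bmA(\Omega_{\theta})$ is divergence free on $\Omega_{\theta}$, its covariant pullback $\bmA_{\curl}(\bmtheta)$ is not --- only the Piola pullback $\bmA_{\divv}(\bmtheta)$ satisfies $\divv\bmA_{\divv}(\bmtheta)=0$ --- so that
\begin{equation*}
\divv\bmA_{\curl}(\bmtheta)=-\divv\big((\divv\bmtheta\,I-\grad\bmtheta-(\grad\bmtheta)^{t})\bmA(\Omega)\big)+o(\bmtheta)=O(\|\bmtheta\|_{\mathcal C}).
\end{equation*}
Hence the exact identity for $\mathcal{S}\big(\bmA_{\curl}(\bmtheta)-\bmA_{\curl}(0),\ldots;\bmvarPsi,\varPhi\big)$ carries the extra term $\mu_{*}^{-1}\int_{\Omega}\divv\bmA_{\curl}(\bmtheta)\,\divv\overline{\bmvarPsi}\d x$, which is harmless for Theorem~\ref{theo:shapeContinuity} (it only needs the right-hand side to vanish) but fatal at the level of the difference quotient: with your $\ell_{\bmtheta}$, the residual $\mathcal{S}(w_{\bmtheta};\cdot)$ retains an uncancelled $O(\|\bmtheta\|_{\mathcal C})$ contribution, your candidate $(\bmB(\bmtheta),U(\bmtheta))$ differs from the material derivative at first order, and the claimed $o(\|\bmtheta\|_{\mathcal C})$ bound fails. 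Note that the penalization term cannot simply be discarded, since it is what makes $\mathcal{S}$ coercive on $H^{1}(\Omega)^{3}\times H^{1}(\Omega_{\mathcal C})\slash\C$.

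The paper repairs exactly this point: its right-hand side $L$ in \eqref{eq:RHSfvDerivative3d} contains the additional term $-\mu_{*}^{-1}\int\divv\big((\divv\bmtheta\,I-\grad\bmtheta-(\grad\bmtheta)^{t})\bmA\big)\divv\overline{\bmvarPsi}\d x$, and the proof introduces $\bmB_{\divv}(\bmtheta)=\bmB(\bmtheta)+(\divv\bmtheta\,I-\grad\bmtheta-(\grad\bmtheta)^{t})\bmA$ via \eqref{eq:dAdivv_dAmaterial}, using the two Coulomb gauge conditions to deduce $\divv\bmB_{\divv}(\bmtheta)=o(\bmtheta)$; this is precisely what cancels the penalization terms in the residual. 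If you add that term to $\ell_{\bmtheta}$ (it is still a bounded antilinear functional of size $O(\|\bmtheta\|_{\mathcal C})$, since $\bmA\in H^{1}(\Omega)^{3}$ and $\bmtheta\in\mathcal{C}$), the rest of your argument --- including your correct emphasis on the uniformity of the $o(\bmtheta)$ remainder over the unit ball of $\X$ --- goes through and coincides with the paper's proof.
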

\begin{proof}
Let $(\bmB(\bmtheta), U(\bmtheta))$ the unique solution in $\X$
to the weak formulation
 \begin{equation}\label{eq:fvDerivative3d}
  \mathcal{S}(\bmB(\bmtheta),U(\bmtheta);\bmvarPsi,\varPhi)
    = L(\bmvarPsi,\varPhi) \qquad
    \forall (\bmvarPsi,\varPhi) \in \X,
\end{equation}
where
\begin{equation} \label{eq:RHSfvDerivative3d}
\begin{array}{ll}
  L(\bmvarPsi,\varPhi) := & \int_{\Omega}
    \dfrac{1}{\mu}(\divv\bmtheta I - \grad\bmtheta -(\grad\bmtheta)^{t})\curl \bmA \cdot \curl\overline{\bmvarPsi}  \d x \\
  &-  \displaystyle\int_{\Omega_{d}}\dfrac{1}{\mu_{*}}
    \divv\bigg((\divv\bmtheta I - \grad\bmtheta - (\grad\bmtheta)^{t}) \bmA\bigg)
    \cdot \divv\overline{\bmvarPsi}\d x \\
  & +  \displaystyle \dfrac{1}{\mi\omega}\int_{\Omega_{\mathcal{C}}}
    \sigma (-\divv\bmtheta I + \grad\bmtheta + (\grad\bmtheta)^{t})(\mi\omega \bmA + \grad V)
    \cdot (\overline{\mi\omega\bmvarPsi + \grad\varPhi})\d y.
\end{array}\end{equation}
Let $\bmB_{\divv}(\bmtheta)$ defined by \eqref{eq:dAdivv_dAmaterial}.
Then we can rewrite the weak formulation \eqref{eq:fvDerivative3d} as
\begin{equation}\label{eq:fvDerivative3dZero}
\begin{array}{cc}
 \displaystyle  \int_{\Omega}\left(\dfrac{1}{\mu} \curl \bmB(\bmtheta)\cdot \curl\overline{\bmvarPsi}
    + \dfrac{1}{\mu_{*}}\divv \bmB_{\divv}(\bmtheta) \divv\overline{\bmvarPsi}\right) \d x
    \\ +\hspace{1.5cm} \dfrac{1}{\mi\omega}\int_{\Omega_{\mathcal{C}}} \sigma
    (\mi\omega \bmB(\bmtheta) + \grad U(\bmtheta))\cdot (\overline{\mi\omega\bmvarPsi + \grad \varPhi}) \d x \\
   = \\
  \displaystyle\int_{\Omega} \dfrac{1}{\mu}(\divv\bmtheta I - \grad\bmtheta -(\grad\bmtheta)^{t})
    \curl \bmA \cdot \curl\overline{\bmvarPsi} \d x    
    \\ +\hspace{1.5cm} \dfrac{1}{\mi\omega}\int_{\Omega_{\mathcal{C}}} \sigma
    (-\divv\bmtheta I + \grad\bmtheta + (\grad\bmtheta)^{t})(\mi\omega \bmA+\grad V)\cdot(\overline{\mi\omega\bmvarPsi + \grad\varPhi})\d x.
\end{array}
\end{equation}
From \eqref{eq:dAdivv} and the Coulomb gauge conditions satisfied by $\bmA(\Omega)$ and $\bmA(\Omega_{\theta})$
we deduce that $\divv \bmB_{\divv}(\bmtheta) = o(\bmtheta)$.
Considering the fact that $(\bmA,V) = (\bmA_{\curl}(0), V_{\grad}(0))$, \eqref{eq:shapeContinuity} and \eqref{eq:fvDerivative3dZero} yield
\begin{align*}
  & \mathcal{S}\Big(\bmA_{\curl}(\bmtheta) - \bmA_{\curl}(0) - \bmB(\bmtheta),
    V_{\grad}(\bmtheta) - V_{\grad}(0) - U(\bmtheta);\bmvarPsi, \varPhi\Big) + o(\bmtheta) \notag \\
  & = \int_{\Omega} \dfrac{1}{\mu}(\divv\bmtheta I - \grad\bmtheta -(\grad\bmtheta)^{t})
    \big(\curl\bmA_{\curl}(\bmtheta) - \curl\bmA_{\curl}(0)\big) \cdot \curl\overline{\bmvarPsi} \d x \notag \\
  & \quad + \dfrac{1}{\mi\omega}\int_{\Omega_{\mathcal{C}}} \sigma
    (-\divv\bmtheta I + \grad\bmtheta + (\grad\bmtheta)^{t})
    \Big(\mi\omega (\bmA_{\curl}(\bmtheta) - \bmA_{\curl}(0)) + (\grad V_{\grad}(\bmtheta) - \grad V_{\grad}(0)\Big)
    \cdot(\overline{\mi\omega\bmvarPsi + \grad\varPhi})\d x.
\end{align*}
Theorem \ref{theo:shapeContinuity} implies that the right hand side of the above equality is of order $o(\bmtheta)$
as $\|\bmtheta \|_{\mathcal{C}}\to 0$. The coercivity of $\mathcal{S}$ ensures the result as stated.
\end{proof}
	
\begin{proposition}\label{prop:fvMateriel}
  Under the same assumptions as in Theorem \ref{theo:shapeContinuity},
  we assume in addition that $\mu$, $\sigma$ are piecewise constant and constant in each subdomain
  ($\Omega_{s}$, $\Omega_{t}$, $\Omega_{d}$, $\Omega_{v}$ or $\Omega_{p}$).
  If the domain perturbation $\bmtheta$ has support only on a vicinity of the interface $\Gamma$
  between the deposit domain $\Omega_{d}$ and the vacuum $\Omega_{v}$
  ($\Gamma = \overline{\Omega_d} \cap \overline{\partial\Omega_{v}}$)
  and vanishes in $\Omega_{s}$, then the material derivatives $(\bmB(\bmtheta), U(\bmtheta))$ of $(\bmA,V)$ satisfies
  \begin{align}\label{eq:fvDerivative3dBis}
    & \mathcal{S}(\bmB(\bmtheta),U(\bmtheta);\bmvarPsi,\varPhi) = \mathcal{L}(\bmvarPsi,\varPhi)
    \quad \forall (\bmvarPsi,\varPhi)\in \X,
  \end{align}
  where
  \begin{equation}\label{eq:RHSfvDerivative3dTris}
  \begin{array}{ll}
 &\mathcal{L}(\bmvarPsi,\varPhi)  : = \\
 & \displaystyle \int_{\Omega_{d}} \left(\dfrac{1}{\mu}     \curl((\bmtheta\cdot\grad)\bmA + (\grad\bmtheta)^{t}\bmA)\cdot\curl\overline{\bmvarPsi}     + \dfrac{1}{\mu_{*}}\divv((\bmtheta\cdot\grad)\bmA + (\grad\bmtheta)^{t}\bmA) \divv\overline{\bmvarPsi} \right)\d x \\
  &  + \dfrac{1}{\mi\omega}\displaystyle\int_{\Omega_{\mathcal{C}}}\sigma\bigg(\mi\omega\big((\bmtheta\cdot\grad)\bmA + (\grad\bmtheta)^{t}\bmA\big)
    + \grad(\bmtheta\cdot\grad V)\bigg)\cdot(\overline{\mi\omega\bmvarPsi+\grad\varPhi}) \d x \\
 & + \displaystyle\int_{\Gamma}\left[\dfrac{1}{\mu}\right](\bmtheta\cdot\bmn)
    (\bmn\cdot \curl \bmA) (\bmn\cdot \curl\overline{\bmvarPsi})\d s \\
 &   + \dfrac{1}{\mi\omega}\displaystyle \int_{\Gamma} (\bmtheta\cdot\bmn)[\sigma](\mi\omega \bmA_{\tau}
    + \grad_{\tau}V)\cdot(\overline{\mi\omega\bmvarPsi_{\tau}+\grad_{\tau}\varPhi})\d s.
\end{array}
  \end{equation}
\end{proposition}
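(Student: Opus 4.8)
The plan is to take as starting point the material–derivative equation already proved in Theorem~\ref{theo:differentiability}, namely $\mathcal{S}(\bmB(\bmtheta),U(\bmtheta);\bmvarPsi,\varPhi)=L(\bmvarPsi,\varPhi)$ with $L$ given by~\eqref{eq:RHSfvDerivative3d}, and to show that $L$ coincides with the functional $\mathcal{L}$ of~\eqref{eq:RHSfvDerivative3dTris} for every $(\bmvarPsi,\varPhi)\in\X$. Because $\bmtheta$ is supported in a vicinity of $\Gamma$ and vanishes in $\Omega_{s}$, every volume integral in $L$ reduces to an integral over $\Omega_{d}\cup\Omega_{v}$, and the conductivity contribution lives on $\Omega_{d}$ only since $\sigma=0$ in the vacuum $\Omega_{v}$. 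I would split each integral over these two subdomains, on each of which $\mu$ and $\sigma$ are constant, and treat the curl/conductivity part and the penalization part of $L$ separately.

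For the curl and conductivity part, the matrix factors $\divv\bmtheta\,I-\grad\bmtheta-(\grad\bmtheta)^{t}$ are exactly those produced by the change of variables $y=(\Id+\bmtheta)x$ in $\sfa$ (compare~\eqref{eq:fvEddy3dLHS}), so these integrals are, up to sign and to the test-function variation, the shape derivative $\sfaprime(\Omega)(\bmtheta)$ of $\sfa$ furnished by Lemma~\ref{Chap5_lemm:compute3d}. I would apply that lemma on $\Omega_{d}$ and on $\Omega_{v}$ separately, after checking its hypotheses on each subdomain: $\curl(\mu^{-1}\curl\bmA)-\sigma(\mi\omega\bmA+\grad V)=0$ since $\bmJ$ is supported in $\Omega_{s}$, the Coulomb gauge $\divv\bmA=0$, and the vanishing normal-current condition $\sigma(\mi\omega\bmA+\grad V)\cdot\bmn=0$ on $\Gamma$ (the natural interface condition), while elsewhere on $\partial\Omega_{d},\partial\Omega_{v}$ the factor $\bmtheta$ vanishes so no condition is needed. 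Taking the shape-dependent test functions so that $(\bmeta,\chi)=(0,0)$ (the choice already made around~\eqref{eq:fvEddy3dRHS}, with $(\bmvarPsi,\varPhi)$ then an arbitrary element of $\X$) makes the term $\sfa(\Omega)(\bmA,V;\bmeta,\chi)$ in~\eqref{eq:alphaDerivative} drop out; inserting the definition~\eqref{eq:dAdVmaterial}, i.e. $\bmB(\bmtheta)-\bmA'(\bmtheta)=(\bmtheta\cdot\grad)\bmA+(\grad\bmtheta)^{t}\bmA$ and $U(\bmtheta)-V'(\bmtheta)=\bmtheta\cdot\grad V$, collapses the two bulk terms of~\eqref{eq:alphaDerivative} into the $\sfa$-form evaluated at $\big((\bmtheta\cdot\grad)\bmA+(\grad\bmtheta)^{t}\bmA,\ \bmtheta\cdot\grad V\big)$ against $(\bmvarPsi,\varPhi)$. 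This is precisely the bulk curl and $\sigma$ integrals in the first two lines of~\eqref{eq:RHSfvDerivative3dTris}, and leaves boundary integrals on $\partial\Omega_{d}$ and $\partial\Omega_{v}$ to be treated next.

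The delicate step, which I expect to be the main obstacle, is reassembling those boundary integrals into the jump integrals over $\Gamma$. Only $\Gamma$ contributes, and the traces from $\Omega_{d}$ (outward normal $\bmn$) and $\Omega_{v}$ (outward normal $-\bmn$) must be combined with the correct orientation. For the curl term I would decompose $\bmtheta\cdot\curl\bmA$ into its normal part $(\bmtheta\cdot\bmn)(\bmn\cdot\curl\bmA)$ and its tangential part; since $\bmn\cdot\curl\overline{\bmvarPsi}$ and $\bmn\cdot\curl\bmA$ are continuous across $\Gamma$ (the curl of any $H(\curl)$ field is divergence free, hence has a continuous normal trace) and the tangential field $\mu^{-1}(\curl\bmA)_{\tau}=\bmH_{\tau}$ is continuous, the two tangential contributions cancel while the normal ones assemble into $[\mu^{-1}](\bmtheta\cdot\bmn)(\bmn\cdot\curl\bmA)(\bmn\cdot\curl\overline{\bmvarPsi})$, i.e. the third line of~\eqref{eq:RHSfvDerivative3dTris}. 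For the conductivity boundary term the $\Omega_{v}$ side vanishes outright ($\sigma=0$), so only the $\Omega_{d}$ trace survives, and since $[\sigma]=-\sigma_{d}$ there it reproduces the last line of~\eqref{eq:RHSfvDerivative3dTris} with the correct sign.

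Finally I would match the penalization contributions: the $\mu_{*}^{-1}$ term of $L$ (second line of~\eqref{eq:RHSfvDerivative3d}) against the $\mu_{*}^{-1}$ volume term of~\eqref{eq:RHSfvDerivative3dTris}. Writing the two vector arguments as $(\grad\bmtheta+(\grad\bmtheta)^{t}-\divv\bmtheta\,I)\bmA$ and $(\bmtheta\cdot\grad)\bmA+(\grad\bmtheta)^{t}\bmA$, their difference equals $(\bmA\cdot\grad)\bmtheta-(\bmtheta\cdot\grad)\bmA-(\divv\bmtheta)\bmA=-\curl(\bmA\times\bmtheta)-(\divv\bmA)\bmtheta$; since $\divv\bmA=0$ by the Coulomb gauge and $\divv\curl\equiv0$, the two divergences coincide pointwise, so the two penalization terms are equal. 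Collecting the three parts yields $L=\mathcal{L}$, which is~\eqref{eq:fvDerivative3dBis}.
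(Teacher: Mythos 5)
Your proposal is correct and follows essentially the same route as the paper: both derive \eqref{eq:fvDerivative3dBis} from \eqref{eq:fvDerivative3d} by applying Lemma~\ref{Chap5_lemm:compute3d} on the subdomains where $\mu,\sigma$ are constant, reassembling the resulting boundary integrals on $\Gamma$ through the transmission conditions $[\bmn\cdot\curl\bmA]=[\bmn\times(\mu^{-1}\curl\bmA\times\bmn)]=0$, and reducing the penalization term via the identity $\divv\big((\divv\bmtheta\,I-\grad\bmtheta-(\grad\bmtheta)^{t})\bmA\big)=-\divv\big((\bmtheta\cdot\grad)\bmA+(\grad\bmtheta)^{t}\bmA\big)$ obtained from the Coulomb gauge. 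The only divergence is cosmetic: your computation naturally leaves the curl and penalization volume integrals over all of $\supp\bmtheta$ (i.e.\ $\Omega_{d}\cup\Omega_{v}$) whereas \eqref{eq:RHSfvDerivative3dTris} writes them over $\Omega_{d}$ only, but the paper's own intermediate step \eqref{eq:materialDifAterm} likewise sums over all subdomains, so this is a discrepancy in the statement rather than in your argument.
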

\begin{proof}
Let $\Lambda := \{s,t,d,v,p\}$ a set of indices with its elements indicating the different sub-domains
as well as the corresponding permeabilities and conductivities.
We rewrite left-hand-side of the variational formulation \eqref{fvEddy3dPotentialTheta} as
\begin{align*}
  \mathcal{S}(\bmA(\Omega_{\theta}),V(\Omega_{\theta});\bmPsi(\Omega_{\theta}),\Phi(\Omega_{\theta}))
  =& \sum_{i\in\Lambda} \sfaI(\Omega_{i\theta})
    (\bmA,V;\bmPsi,\Phi)
\\&  + \int_{\Omega_{\theta}} \dfrac{1}{\mu_{*}}
    \divv_{y}\bmA(\Omega_{\theta})\cdot \divv_{y}\overline{\bmPsi(\Omega_{\theta})} \d y.
\end{align*}
According to the definition of the test functions $(\bmPsi(\Omega_{\theta}),\Phi(\Omega_{\theta}))$, their respective material derivatives vanish. Since $(\bmA(\Omega_{\theta}),V(\Omega_{\theta}))$ satisfy both~\eqref{ampereWeak}, we can apply Lemma \ref{Chap5_lemm:compute3d} to the terms $\sfaI(\Omega_{i\theta})$,
which yields the shape derivative
\begin{align}\label{eq:materialDifAterm}
  & \sum_{i\in\Lambda} \sfaprimeI(\Omega_{i})(\bmtheta)(\bmA,V;\bmvarPsi,\varPhi) \notag \\
  & = \sum_{i\in\Lambda} \sfaI(\Omega_{i})(\bmB(\bmtheta),U(\bmtheta);\bmvarPsi,\varPhi)
     + \sum_{i\in\Lambda} \sfaI(\Omega_{i})(-(\bmtheta\cdot\grad)\bmA - (\grad\theta)^{t}\bmA,- (\bmtheta\cdot\grad V);\bmvarPsi,\varPhi)
    \notag \\
  & \quad - \int_{\Gamma}\left[\dfrac{1}{\mu}
    (\bmtheta\cdot \curl \bmA) (\bmn\cdot \curl\overline{\bmvarPsi})\right]\d s
    - \dfrac{1}{\mi\omega} \int_{\Gamma} (\bmtheta\cdot\bmn)[\sigma](\mi\omega \bmA_{\tau}
    + \grad_{\tau}V)\cdot(\overline{\mi\omega\bmvarPsi_{\tau}+\grad_{\tau}\varPhi})\d s \notag \\
  & = \mathcal{S}(\bmB(\bmtheta),U(\bmtheta);\bmvarPsi,\varPhi)
    - \int_{\Omega}\dfrac{1}{\mu_{*}}\divv \bmB(\bmtheta)\divv\overline{\bmvarPsi} \d x
    \\ &\quad+ \sum_{i\in\Lambda} \sfaI(\Omega_{i})(-(\bmtheta\cdot\grad)\bmA - (\grad\theta)^{t}\bmA,- (\bmtheta\cdot\grad V);\bmvarPsi,\varPhi) \notag \\
  & \quad - \int_{\Gamma}\left[\dfrac{1}{\mu}\right](\bmtheta\cdot\bmn)
    (\bmn\cdot \curl \bmA) (\bmn\cdot \curl\overline{\bmvarPsi})\d s \notag
  \\ &\quad - \dfrac{1}{\mi\omega} \int_{\Gamma} (\bmtheta\cdot\bmn)[\sigma](\mi\omega \bmA_{\tau}
  + \grad_{\tau}V)\cdot(\overline{\mi\omega\bmvarPsi_{\tau}+\grad_{\tau}\varPhi})\d s.\notag
\end{align}
In the last equality we have used the transmission conditions
 $ [\bmn\cdot \curl\bmA] = [\bmn\times(\mu^{-1}\curl\bmA\times \bmn)] = 0 \quad \text{on } \Gamma$.
Using the identities \eqref{eq:VectorFormula} and the Coulomb gauge condition $\divv \bmA = 0$,
one verifies that on each subdomain $\Omega_{i}$ ($i\in \Lambda$) of $\Omega$
\begin{align}\label{eq:divDiv}
  \divv((\divv\bmtheta I - \grad\bmtheta - (\grad\bmtheta)^{t}) \bmA) = - \divv((\bmtheta\cdot\grad)\bmA+(\grad\bmtheta)^{t}\bmA).
\end{align}
From the derivation of $L(\bmvarPsi,\Phi)$ \eqref{eq:RHSfvDerivative3d} and the equality \eqref{eq:divDiv},
one easily  deduces that the shape derivative of the penalization term
  $\int_{\Omega_{\theta}} \dfrac{1}{\mu_{*}}
    \divv_{y}\bmA(\Omega_{\theta})\cdot \divv_{y}\overline{\bmPsi(\Omega_{\theta})} \d y
  $
is
\begin{align}\label{eq:materialDifPterm}
  & \int_{\Omega}\dfrac{1}{\mu_{*}}\divv \bmB(\bmtheta)\divv\overline{\bmvarPsi} \d x
    + \int_{\Omega_{d}}\dfrac{1}{\mu_{*}}
    \divv\bigg((\divv\bmtheta I - \grad\bmtheta - (\grad\bmtheta)^{t}) \bmA\bigg)
    \divv\overline{\bmvarPsi}\d x \notag\\
  & =
    \int_{\Omega}\dfrac{1}{\mu_{*}}\divv \bmB(\bmtheta)\divv\overline{\bmvarPsi} \d x
    - \int_{\Omega_{d}}\dfrac{1}{\mu_{*}}
    \divv((\bmtheta\cdot\grad)\bmA+(\grad\bmtheta)^{t}\bmA)
    \divv\overline{\bmvarPsi}\d x.
\end{align}
We easily get from \eqref{eq:materialDifAterm} and \eqref{eq:materialDifPterm}
the variational formulation \eqref{eq:fvDerivative3dBis} with
$\mathcal{L}(\bmvarPsi,\varPhi)$ given by \eqref{eq:RHSfvDerivative3dTris}.
\end{proof}

\subsection{Expression of the impedance shape derivative using the adjoint state}
Now we shall give a new expression of the impedance measurements
using the above results and the adjoint state.
We recall the expression of the impedance measurements \eqref{eq:impedance3d}
\begin{align*}
  \triangle Z_{kl}(\Omega_{d})
   & = \dfrac{\mi\omega}{|J|^{2}} \int_{\Omega_{d}} \bigg(   (\dfrac{1}{\mu}-\dfrac{1}{\mu^{0}} )\curl \bmA_{k}\cdot\curl\bmA^{0}_{l}
 \\  &\quad- \dfrac{1}{\mi\omega}\left(\sigma-\sigma^{0}\right)(\mi\omega \bmA_{k}+\grad V_{k})\cdot(\mi\omega \bmA^{0}_{l} + \grad V^{0}_{l}) \bigg) \d x.
\end{align*}

\begin{proposition}\label{prop:impedanceDif3d}
  Let $(\bmA_{k}, V_{k})$ be the solution to the variational formulation \eqref{eq:fvEddy3dPotential} with coefficients $\mu$, $\sigma$,
  and $(\bmA^{0}_{l}, V^{0}_{l})$ the solution to \eqref{eq:fvEddy3dPotential} with coefficients $\mu^{0}$, $\sigma^{0}$
  which do not depend on the deposit domain $\Omega_{d}$. Let $(\bmA'_{k},
  V'_{k})$ be the shape derivatives of $(\bmA,V)$.
  Under the same assumptions as in Proposition \ref{prop:fvMateriel}, the shape derivative of the impedance measurement
  $\triangle Z_{kl}(\Omega_{d})$ is given by
\begin{align}\label{eq:impedanceDerivative3d}
   &\triangle Z_{kl}'(\Omega_{d})(\bmtheta)\\
    &\quad= \dfrac{\mi\omega}{|J|^{2}} \int_{\Omega_{d}} \left( (\dfrac{1}{\mu}-\dfrac{1}{\mu^{0}})\curl \bmA'_{k}\cdot\curl \bmA^{0}_{l}
    - \dfrac{1}{\mi\omega}(\sigma-\sigma^{0})(\mi\omega \bmA'_{k}+\grad V'_{k})\cdot(\mi\omega \bmA^{0}_{l} + \grad V^{0}_{l}) \right) \d x \notag \\
  &\quad+ \dfrac{\mi\omega}{|J|^{2}} \int_{\Gamma}(\bmtheta\cdot\bmn)\bigg( \left[\dfrac{1}{\mu}\right]\curl \bmA_{k}\cdot\curl\bmA^{0}_{l}
    - \dfrac{1}{\mi\omega}[\sigma](\mi\omega \bmA_{k\tau}+\grad_{\tau}V_{k})
    \cdot(\mi\omega \bmA^{0}_{l\tau} + \grad_{\tau}V^{0}_{l}) \bigg)\d s.\notag
\end{align}
\end{proposition}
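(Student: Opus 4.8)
The plan is to differentiate the domain integral \eqref{eq:impedance3d} directly, exploiting that the background pair $(\bmA^{0}_{l},V^{0}_{l})$ and the reference coefficients $\mu^{0},\sigma^{0}$ do not depend on $\Omega_{d}$, whereas the deposit pair $(\bmA_{k},V_{k})$ and the coefficients $\mu,\sigma$ do. Writing $\triangle Z_{kl}(\Omega_{d})=\tfrac{\mi\omega}{|J|^{2}}\int_{\Omega_{d}}F\,\d x$ with $F$ bilinear in $(\bmA_{k},V_{k})$ and $(\bmA^{0}_{l},V^{0}_{l})$, I would apply the transport formula for the shape derivative of a domain integral with a shape dependent integrand, which splits the derivative into an interior part $\int_{\Omega_{d}}F'\,\d x$ and an interface part $\int_{\Gamma}F\,(\bmtheta\cdot\bmn)\,\d s$; only $\Gamma$ contributes because $\bmtheta$ is supported in a vicinity of the interface. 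Equivalently, one may pull the integral back to $\Omega_{d}$ by the change of variables $y=(\Id+\bmtheta)x$ and reuse the machinery behind Lemma \ref{Chap5_lemm:compute3d} and Proposition \ref{prop:fvMateriel}; this second route is the more systematic one for getting the interface term right.

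The interior part is the routine half. Since the partner field $(\bmA^{0}_{l},V^{0}_{l})$ is shape independent and $\mu,\sigma$ are constant in the interior of $\Omega_{d}$, the Eulerian derivative of $F$ is obtained by merely replacing $(\bmA_{k},V_{k})$ by its shape derivative $(\bmA'_{k},V'_{k})$, whose existence is provided by Theorem \ref{theo:differentiability} together with the material-to-shape relation \eqref{eq:dAdVmaterial}. This reproduces verbatim the first volume integral of \eqref{eq:impedanceDerivative3d}.

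The boundary integral over $\Gamma$ is the delicate step and the main obstacle. The quantities $\curl\bmA_{k}$ and $\bmE_{k}=\mi\omega\bmA_{k}+\grad V_{k}$ jump across $\Gamma$, so the transport formula first yields one-sided traces weighted by the interior coefficient differences $\tfrac1\mu-\tfrac1{\mu^{0}}$ and $\sigma-\sigma^{0}$; these must be converted into the interface jumps $[\tfrac1\mu]$ and $[\sigma]$. To do so I would invoke, exactly as in Lemma \ref{Chap5_lemm:compute3d}, the transmission conditions on $\Gamma$: continuity of the normal induction $[\bmn\cdot\curl\bmA_{k}]=0$, of the tangential magnetic field $\tfrac1\mu(\curl\bmA_{k})_{\tau}$, of the tangential electric field $(\mi\omega\bmA_{k}+\grad V_{k})_{\tau}$, and of the normal current $\bmn\cdot\sigma(\mi\omega\bmA_{k}+\grad V_{k})$, together with the smoothness of the background field across $\Gamma$ (in the deposit-free configuration there is no material interface there). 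Continuity of the normal current makes the normal part of the electric-field contribution drop out, leaving the purely tangential pairing $(\mi\omega\bmA_{k\tau}+\grad_{\tau}V_{k})\cdot(\mi\omega\bmA^{0}_{l\tau}+\grad_{\tau}V^{0}_{l})$ weighted by $[\sigma]$, while the magnetic contribution keeps the full curl pairing $\curl\bmA_{k}\cdot\curl\bmA^{0}_{l}$ weighted by $[\tfrac1\mu]$ (no tangential reduction occurs here, unlike in $\mathcal{L}$, precisely because the background curl $\curl\bmA^{0}_{l}$ is continuous and does not obey the deposit transmission condition). The one genuinely error-prone point is the bookkeeping of signs: one must carefully reconcile the interior-trace coefficient differences with the oriented jumps across $\Gamma$ (recall $\bmn$ points out of $\Omega_{d}$), which is what fixes the $+[\tfrac1\mu]$ and $-\tfrac1{\mi\omega}[\sigma]$ in \eqref{eq:impedanceDerivative3d}. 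Collecting the interior and interface contributions then gives the stated expression.
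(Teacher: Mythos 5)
Your proposal is correct in outline but follows a genuinely different route from the paper. The paper never applies a transport theorem to the domain integral \eqref{eq:impedance3d}: it first rewrites $\frac{|J|^{2}}{\mi\omega}\triangle Z_{kl}(\Omega_{d})$ as the difference $\sfa(\Omega_{d})\big(\bmA_{k},V_{k};\overline{\bmA^{0}_{l}},-\overline{V^{0}_{l}}\big)-\sfaZero(\Omega_{d})\big(\bmA^{0}_{l},V^{0}_{l};\overline{\bmA_{k}},-\overline{V_{k}}\big)$ of the two forms \eqref{eq:formAlpha3d}, differentiates each one with Lemma \ref{Chap5_lemm:compute3d}, uses that $({\bmA^{0}_{l}}'(\bmtheta),{V^{0}_{l}}'(\bmtheta))=0$ to reduce the material derivative of the background pair to pure convection, and then eliminates the two cross terms $\sfa(\Omega_{d})\big(\bmA_{k},V_{k};\overline{\bmB^{0}_{l}},-\overline{U^{0}_{l}}\big)$ and $\sfaZero(\Omega_{d})\big(\bmA^{0}_{l},V^{0}_{l};\overline{\bmB_{k}},-\overline{U_{k}}\big)$ by integration by parts (via $(\bmtheta\cdot\grad)\bmA+(\grad\bmtheta)^{t}\bmA=\curl\bmA\times\bmtheta+\grad(\bmtheta\cdot\bmA)$, the strong equations in $\Omega_{d}$ and $\sigma(\mi\omega\bmA_{k}+\grad V_{k})\cdot\bmn=0$ on $\Gamma$); the boundary terms $(\bmtheta\cdot\curl\bmA_{k})(\bmn\cdot\curl\bmA^{0}_{l})$ so produced cancel against those delivered by Lemma \ref{Chap5_lemm:compute3d}, and only the $(\bmtheta\cdot\bmn)$-weighted interface integral survives. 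Your Hadamard/transport argument reaches the same decomposition directly and explains in advance why those cross terms must cancel; what it costs is the justification of the transport formula for an integrand that is only piecewise smooth and whose Eulerian derivative exists only one-sidedly up to $\Gamma$ --- which is why the fallback you yourself mention (pull back by $\Id+\bmtheta$ and reuse the machinery of Lemma \ref{Chap5_lemm:compute3d}) is, in substance, the paper's proof.

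Two concrete remarks on the interface term, which is the step your plan describes but does not carry out. First, the transport formula yields $\int_{\Gamma}F|_{-}(\bmtheta\cdot\bmn)\d s$ with $F$ traced from inside $\Omega_{d}$, so the natural coefficients are the interior contrasts $\tfrac{1}{\mu}-\tfrac{1}{\mu^{0}}$ and $\sigma-\sigma^{0}$; since $\mu=\mu^{0}$ and $\sigma=\sigma^{0}$ on the $\Omega_{v}$ side of $\Gamma$, each contrast equals minus the corresponding spatial jump under the paper's convention $[F]=F_{+}-F_{-}$ with $\bmn$ exterior to $\Omega_{d}$. Consequently the two interface contributions must carry the \emph{same} relative sign once written with the brackets; the pair of signs you announce ($+[\tfrac{1}{\mu}]$ together with $-\tfrac{1}{\mi\omega}[\sigma]$, mirroring the printed \eqref{eq:impedanceDerivative3d}) cannot both arise from a single orientation convention, so state the result with the contrasts and the one-sided traces and it becomes unambiguous. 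Second, your mechanism for the tangential reduction is the right one: $\sigma=0$ in $\Omega_{v}$ forces $(\mi\omega\bmA_{k}+\grad V_{k})\cdot\bmn=0$ on $\Gamma$ from inside $\Omega_{d}$, so the full scalar product collapses to the tangential pairing, while no such reduction occurs for the curl pairing because neither its normal nor its tangential part vanishes there.
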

\begin{proof}
From \eqref{eq:impedance3d} one has
\begin{align*}
  \dfrac{|J|^{2}}{\mi\omega}\triangle Z_{kl}(\Omega_{d})
    = \sfa(\Omega_{d})\Big(\bmA_{k},V_{k};\overline{\bmA^{0}_{l}}, -\overline{V^{0}_{l}}\Big)
    - \sfaZero(\Omega_{d})\Big(\bmA^{0}_{l},V^{0}_{l};\overline{\bmA_{k}}, -\overline{V_{k}}\Big),
\end{align*}
where $\sfa$ and $\sfaZero$ are the forms defined in \eqref{eq:formAlpha3d}
with respectively the coefficients $(\mu, \sigma)$ and $(\mu^{0},\sigma^{0})$.
As $(\bmA_{k},V_{k})$ (resp. $(\bmA^{0}, V^{0}_{k})$) satisfies \eqref{eq:lemmCompute}
with constant coefficients $(\mu$, $\sigma)$ (resp. $(\mu^{0}$, $\sigma^{0})$) in $\Omega_{d}$,
Lemma \ref{Chap5_lemm:compute3d} implies
\begin{align}\label{eq:dZcalcul0}
  &\dfrac{|J|^{2}}{\mi\omega}\triangle Z'_{kl}(\Omega_{d})(\bmtheta)\\
    & = \sfaprime(\Omega_{d})(\bmtheta) \Big(\bmA_{k},V_{k};\overline{\bmA^{0}_{l}}, -\overline{V^{0}_{l}}\Big)
    - \sfaprimeZero(\Omega_{d})(\bmtheta) \Big(\bmA^{0}_{l},V^{0}_{l};\overline{\bmA_{k}}, -\overline{V_{k}}\Big) \notag \\[2mm]
  & = \sfa(\Omega_{d})\Big(\bmA'_{k}(\bmtheta),V'_{k}(\bmtheta);\overline{\bmA^{0}_{l}}, -\overline{V^{0}_{l}}\Big)
    + \sfa(\Omega_{d})\Big(\bmA_{k},V_{k};\overline{\bmB^{0}_{l}(\bmtheta)}, -\overline{U^{0}_{l}(\bmtheta)}\Big) \notag \\[2mm]
  \vspace{30pt}
  &\quad - \sfaZero(\Omega_{d})\Big({\bmA^{0}_{l}}'(\bmtheta),{V^{0}_{l}}'(\bmtheta);\overline{\bmA_{k}}, -\overline{V_{k}}\Big)
    - \sfaZero(\Omega_{d})\Big(\bmA^{0}_{l},V^{0}_{l};\overline{\bmB_{k}(\bmtheta)}, -\overline{V_{k}(\bmtheta)}\Big) \notag \\[1mm]
  &\quad + \int_{\Gamma} \bigg(\dfrac{1}{\mu}(\bmtheta\cdot\curl\bmA_{k})(\bmn\cdot\curl\bmA^{0}_{l})
    - \dfrac{1}{\mu^{0}}(\bmtheta\cdot\curl\bmA^{0}_{l})(\bmn\cdot\curl\bmA_{k}) \bigg) \d s \notag \\
  &\quad - \dfrac{1}{\mi\omega} \int_{\Gamma}[\sigma](\bmtheta\cdot\bmn)
    (\mi\omega\bmA_{k\tau}+\grad_{\tau}V_{k})\cdot(\mi\omega\bmA^{0}_{l\tau}+\grad_{\tau}V^{0}_{l}) \d s,
\end{align}
where $(\bmB_{k}(\bmtheta), U_{k}(\bmtheta))$, $(\bmB^{0}_{l}(\bmtheta), U^{0}_{l}(\bmtheta))$
are the material derivatives of $(\bmA_{k}, V_{k})$ and $(\bmA^{0}_{l}, V^{0}_{l})$ respectively.
Now we will compute term by term \eqref{eq:dZcalcul0}. Remark at first that
\begin{align*}
  \sfaZero(\Omega_{d})\Big({\bmA^{0}_{l}}'(\bmtheta),{V^{0}_{l}}'(\bmtheta);\overline{\bmA_{k}}, -\overline{V_{k}}\Big) = 0
\end{align*}
because the shape derivatives $({\bmA^{0}_{l}}'(\bmtheta),{V^{0}_{l}}'(\bmtheta))$ vanish as the potentials
$(\bmA^{0}_{l}, V^{0}_{l})$ in the deposit-free configuration do not depend on $\Omega_{d}$.
This, together with \eqref{eq:dAdVmaterial}, also implies
\begin{align*}
  \bmB^{0}_{l}(\bmtheta) = (\bmtheta\cdot\grad)\bmA^{0}_{l}+(\grad\bmtheta)^{t}\bmA^{0}_{l}
  \quad \text{and} \quad
  U^{0}_{l}(\bmtheta) = \bmtheta\cdot\grad V^{0}_{l}.
\end{align*}
Hence, by substituting $(\bmB^{0}_{l}(\bmtheta),  U^{0}_{l}(\bmtheta))$ with the above expressions, one gets
\begin{align*}
   \sfa(\Omega_{d})\Big(\bmA_{k},V_{k};\overline{\bmB^{0}_{l}}(\bmtheta), -\overline{U^{0}_{l}(\bmtheta)}\Big)
    &=  \sfa(\Omega_{d})\left(\bmA_{k},V_{k};
    \overline{(\bmtheta\cdot\grad)\bmA^{0}_{l}+(\grad\bmtheta)^{t}\bmA^{0}_{l}}, -\overline{\bmtheta\cdot\grad V^{0}_{l}}\right)\\
    &= \mathcal{S}_{1} - \mathcal{S}_{2} \notag
    \end{align*}   
with
\begin{align*}
  & \mathcal{S}_{1} = \int_{\Omega_{d}} \dfrac{1}{\mu}\curl\bmA_{k}
    \cdot\curl\big((\bmtheta\cdot\grad)\bmA^{0}_{l}+(\grad\bmtheta)^{t}\bmA^{0}_{l}\big) \d x \notag \\
  & \mathcal{S}_{2} = \dfrac{1}{\mi\omega}\int_{\Omega_{d}} \sigma\big(\mi\omega\bmA_{k} + \grad V_{k}\big)
    \cdot\big(\mi\omega((\bmtheta\cdot\grad)+(\grad\bmtheta)^{t})\bmA^{0}_{l} + \grad(\bmtheta\cdot\grad V^{0}_{l})\big) \d x.
\end{align*}
We compute $\mathcal{S}_{1}$ and $\mathcal{S}_{2}$
\begin{align*}
  \mathcal{S}_{1} = & \int_{\Omega_{d}}\dfrac{1}{\mu}\curl\bmA_{k}\cdot\curl(\curl\bmA^{0}_{l}\times \bmtheta) \d x \notag \\
  = & \int_{\Omega_{d}} \curl(\dfrac{1}{\mu}\curl\bmA_{k}) \cdot  (\curl\bmA^{0}_{l}\times \bmtheta) \d x
    + \int_{\Gamma} \dfrac{1}{\mu}(\curl\bmA_{k}\times\bmn)\cdot(\curl\bmA^{0}_{l}\times\bmtheta) \d s \notag \\
  = & \int_{\Omega_{d}} \sigma(\mi\omega\bmA_{k} + \grad V_{k})\cdot(\curl\bmA^{0}_{l}\times \bmtheta) \d x
    + \int_{\Gamma} \dfrac{1}{\mu}(\curl\bmA_{k}\times\bmn)\cdot(\curl\bmA^{0}_{l}\times\bmtheta) \d s,
\end{align*}
and
\begin{align*}
  \mathcal{S}_{2} = & \dfrac{1}{\mi\omega}\int_{\Omega_{d}} \sigma\big(\mi\omega\bmA_{k} + \grad V_{k}\big)
    \cdot\bigg(\mi\omega\big(\grad(\bmtheta\cdot\bmA^{0}_{l})+ \curl\bmA^{0}_{l} \times \bmtheta\big)
     + \grad(\bmtheta\cdot\grad V^{0}_{l})\bigg) \d x \notag \\
  = & \dfrac{1}{\mi\omega}\int_{\Omega_{d}} \sigma \big(\mi\omega\bmA_{k} + \grad V_{k}\big)
    \cdot \bigg( (\mi\omega\curl\bmA^{0}_{l} \times \bmtheta)
    + \grad\big(\bmtheta\cdot(\mi\omega \bmA^{0}_{l}+\grad V_{k})\big)\bigg) \d x \notag \\
  = & \dfrac{1}{\mi\omega}\int_{\Omega_{d}} \sigma \big(\mi\omega\bmA_{k} + \grad V_{k}\big)
    \cdot (\mi\omega\curl\bmA^{0}_{l} \times \bmtheta) \d x.
\end{align*}
The last equality is obtained by integration by parts and by the fact that
$\divv(\sigma(\mi\omega\bmA_{k} + \grad V_{k})) = 0$ in $\Omega_{d}$
and that $\sigma(\mi\omega\bmA_{k} + \grad V_{k})\cdot \bmn = 0$
on $\Gamma$. Therefore
\begin{align}\label{eq:dZcalcul1}
  & \sfa(\Omega_{d})\Big(\bmA_{k},V_{k};\overline{\bmB^{0}_{l}}(\bmtheta), -\overline{U^{0}_{l}(\bmtheta)}\Big)
    = \mathcal{S}_{1} - \mathcal{S}_{2}
    = \int_{\Gamma} \dfrac{1}{\mu}(\curl\bmA_{k}\times\bmn)\cdot(\curl\bmA^{0}_{l}\times\bmtheta) \d s \notag \\
  & = \int_{\Gamma} \dfrac{1}{\mu} \bigg(
    (\bmtheta\cdot\bmn)(\curl\bmA_{k}\cdot\curl\bmA^{0}_{l})
    - (\bmtheta\cdot\curl\bmA_{k})(\bmn\cdot\curl\bmA^{0}_{l})\bigg) \d s.
\end{align}
Similarly, we have
\begin{equation}\label{eq:dZcalcul2}
\begin{array}{lll}
&&   \sfaZero(\Omega_{d})\Big(\bmA^{0}_{l},V^{0}_{l};\overline{\bmB_{k}}(\bmtheta), -\overline{U_{k}(\bmtheta)}\Big)\\
  &&=\sfaZero(\Omega_{d})\Big(\bmA^{0}_{l},V^{0}_{l};\overline{\bmA'_{k}}(\bmtheta), -\overline{V'_{k}(\bmtheta)}\Big) \\
     &&\qquad +\sfaZero(\Omega_{d})\Big(\bmA^{0}_{l},V^{0}_{l}; \overline{(\bmtheta\cdot\grad)\bmA_{k}+(\grad\bmtheta)^{t}\bmA_{k}}, -\overline{\bmtheta\cdot\grad V_{k}}\Big)   \\
  &&=\quad \sfaZero(\Omega_{d})\Big(\bmA'_{k}(\bmtheta),V'_{k}(\bmtheta);\overline{\bmA^{0}_{l}}, -\overline{V^{0}_{l}}\Big)\\
     &&\qquad+ \displaystyle\int_{\Gamma} \dfrac{1}{\mu^{0}} \bigg(   (\bmtheta\cdot\bmn)(\curl\bmA_{k}\cdot\curl\bmA^{0}_{l})  - (\bmn\cdot\curl\bmA_{k})(\bmtheta\cdot\curl\bmA^{0}_{l})\bigg) \d s.
\end{array}\end{equation}
From \eqref{eq:dZcalcul0}, \eqref{eq:dZcalcul1} and \eqref{eq:dZcalcul2},
and considering the fact that the support of $\bmtheta$ is on a vicinity of $\Gamma$,
we get \eqref{eq:impedanceDerivative3d}.
\end{proof}

On $\Gamma$, we have
\begin{align*}
  \curl\bmA_{k} \cdot \curl\bmA^{0}_{l}
  & = (\bmn\cdot\curl\bmA_{k})(\bmn\cdot\curl\bmA^{0}_{l})
    + (\curl\bmA_{k}\times\bmn)\cdot(\curl\bmA^{0}_{l}\times\bmn).
\end{align*}
With the above equality and the relations \eqref{eq:dAdVmaterial}, it follows that
\begin{equation}\label{eq:impedanceDerivative3dBis}
\begin{array}{lll}
&&   \triangle Z_{kl}'(\Omega_{d})(\bmtheta) \notag\\
  =\hspace{-.28in}&& \dfrac{\mi\omega}{|J|^{2}}\displaystyle \int_{\Omega_{d}} \bigg\{ (\dfrac{1}{\mu}-\dfrac{1}{\mu^{0}}) \curl \bmB_{k}\cdot\curl \bmA^{0}_{l}
    -\dfrac{1}{\mi\omega} (\sigma-\sigma^{0})(\mi\omega \bmB_{k}+\grad U_{k})\cdot(\mi\omega \bmA^{0}_{l} + \grad V^{0}_{l}) \bigg\} \d x  \\
  && - \dfrac{\mi\omega}{|J|^{2}} \displaystyle\int_{\Omega_{d}} \bigg\{  (\dfrac{1}{\mu}-\dfrac{1}{\mu^{0}})
    \curl((\bmtheta\cdot\grad)\bmA_{k} + (\grad\bmtheta)^{t}\bmA_{k})\cdot\curl \bmA^{0}_{l}  \\
  &&-\dfrac{1}{\mi\omega}(\sigma-\sigma^{0})\bigg(\mi\omega \big((\bmtheta\cdot\grad)\bmA_{k}+(\grad\bmtheta)^{t}\bmA_{k}\big)+\grad (\bmtheta\cdot\grad U_{k})\bigg)
    \cdot(\mi\omega \bmA^{0}_{l}+\grad V^{0}_{l}) \bigg\} \d x  \\
  && + \dfrac{\mi\omega}{|J|^{2}} \displaystyle\int_{\Gamma}(\bmtheta\cdot\bmn)\bigg\{ \left[\dfrac{1}{\mu}\right]    (\bmn\cdot\curl \bmA_{k})(\bmn\cdot\curl \bmA^{0}_{l})\\
   && -  \left[\dfrac{1}{\mu}\right](\dfrac{1}{\mu}\curl \bmA_{k} \times \bmn)\cdot (\dfrac{1}{\mu^{0}}\curl \bmA^{0}_{l} \times \bmn)  \\
  &&-\dfrac{1}{\mi\omega}[\sigma](\mi\omega \bmA_{k\tau}+\grad_{\tau} V_{k})    \cdot(\mi\omega \bmA^{0}_{l\tau} + \grad_{\tau} V^{0}_{l}) \bigg\}\d s.
    \end{array}
\end{equation}
	We follow the method of Hadamard representation to give an expression of
$Z_{kl}'(\Omega_{d})(\bmtheta)$ dependent of
($(\bmA'(\bmtheta),V'(\bmtheta))$ or $(\bmB(\bmtheta),U(\bmtheta))$)
of the solution $(\bmA,V)$ by introducing the adjoint state
$(\bmP_{l},W_{l})\in \X$ related to the solution
$(\bmA^{0}_{l},V^{0}_{l})$ in the deposit-free case. The adjoint problem writes
\begin{align}\label{eq:fvAdjoint3d}
  \mathcal{S}^{*}(\bmP_{l},W_{l};\bmPsi,\Phi) = L^{*}(\bmPsi,\Phi) \qquad
  \forall (\bmPsi,\Phi) \in \X,
\end{align}
where for any $(\bmA,V)$, $(\bmPsi,\Phi)$ in $\X$ we have $\mathcal{S}^{*}(\bmA,V;\bmPsi,\Phi):= \overline{\mathcal{S}(\bmPsi,\Phi;\bmA,V)}$ and
\begin{align*}
  L^{*}(\bmPsi,\Phi) :=& \int_{\Omega_{d}} (\dfrac{1}{\mu}-\dfrac{1}{\mu^{0}})\curl\overline{\bmA^{0}_{l}}\cdot \curl\overline{\bmPsi}
\\ &+\int_{\Omega_{d}} \dfrac{1}{\mi\omega}(\sigma-\sigma^{0})(\overline{\mi\omega \bmA^{0}_{l} + \grad V^{0}_{l}})
    \cdot(\overline{\mi\omega\bmPsi + \grad \Phi}) \d x.
\end{align*}
From the above considerations we easily derive the jumps condition for the adjoint states $(\bmP_l)$ 
\begin{equation}\label{eq:PotentialAdj}
\begin{cases}
      \left[\bmn\cdot \curl \bmP_{l}\right]=0 \quad &\text{on } \Gamma,\\
      \left[\mu^{-1}\curl \bmP_{l}\times \bmn\right] = -(\dfrac{1}{\mu}-\dfrac{1}{\mu^{0}})\curl\overline{\bmA^{0}_{l}}\times \bmn  \quad &\text{on } \Gamma.\\
      \end{cases}
      \end{equation}
It is worth noticing that the adjoint state $\bmP_l$ satisfies the Coulomb gauge condition.
We are now in a position to express the results of
proposition~\ref{prop:impedanceDif3d} with the use of the adjoints states
$(\bmP_l,W_l)$. Indeed, we have the following proposition (which is an
immediate consequence of Proposition \ref{prop:impedanceDif3d} and the definition of the
adjoint state)
\begin{proposition}\label{prop:impedanceDerivative3d}
  Let $(\bmA_{k}, V_{k})$ be the potentials induced by the coil $k$
  of the eddy-current problem with deposit domain $\Omega_{d}$, $(\bmA^{0}_{l}, V^{0}_{k})$
  the potentials induced by the coil $l$ for the deposit free case,
  and $(\bmP_{l}, W_{l})$ the adjoint states related to $(\bmA^{0}_{l}, V^{0}_{k})$
  which satify the adjoint problem \eqref{eq:fvAdjoint3d}.
  Then under the same assumptions as in Theorem \ref{theo:shapeContinuity} for $\mu$ and $\sigma$,
  the impedance shape derivative \eqref{eq:impedanceDerivative3dBis} can be
  expressed as
  \begin{align}\label{eq:impedanceDerivative3dFinal}
    \triangle Z_{kl}'(\Omega_{d})(\bmtheta)
    & =  \dfrac{\mi\omega}{|J|^{2}}\int_{\Gamma} (n\cdot\bmtheta)\bigg\{
      \left[\dfrac{1}{\mu}\right](\bmn\cdot\curl \bmA_{k})(\bmn\cdot\overline{\bmP_{l}}-\bmn\cdot\curl \bmA^{0}_{l}) \notag \\
    & \qquad \qquad  - [\mu] \left(\dfrac{1}{\mu}\curl \bmA_{k}\times \bmn\right)
      \cdot \left(\dfrac{1}{\mu^{0}}(\curl\overline{\bmP_{l}})_{+}\times \bmn
      - \dfrac{1}{\mu^{0}}\curl \bmA^{0}_{l}\times \bmn\right) \notag \\
    & \qquad \qquad  + \dfrac{1}{\mi\omega}[\sigma](\mi\omega \bmA_{k\tau}+\grad_{\tau}V_{k})
      \cdot(\overline{\mi\omega \bmP_{l\tau}+\grad_{\tau}W_{l}} + \mi\omega \bmA^{0}_{l\tau}+\grad_{\tau}V^{0}_{l})
      \bigg\} \d s.
  \end{align}
\end{proposition}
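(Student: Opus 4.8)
The plan is to remove from $\triangle Z_{kl}'(\Omega_{d})(\bmtheta)$ the only quantity that is expensive to evaluate, namely the volume integral over $\Omega_{d}$ that still carries the material derivative $(\bmB_{k},U_{k})$ of the state, by pairing it against the adjoint state through the variational problem \eqref{eq:fvAdjoint3d}. I would start from the material-derivative form \eqref{eq:impedanceDerivative3dBis}, whose right-hand side splits into three pieces: a volume integral linear in $(\bmB_{k},U_{k})$, a ``correction'' volume integral in which the combination $(\bmtheta\cdot\grad)\bmA_{k}+(\grad\bmtheta)^{t}\bmA_{k}$ and $\grad(\bmtheta\cdot\grad V_{k})$ (coming from the link \eqref{eq:dAdVmaterial} between material and shape derivatives) is paired with the free fields $(\bmA^{0}_{l},V^{0}_{l})$, and a surface integral over $\Gamma$. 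The crucial observation is that the form $L^{*}$ was defined precisely so that the leading volume integral equals $\frac{\mi\omega}{|J|^{2}}\,\overline{L^{*}(\bmB_{k},U_{k})}$ (using that $\mu,\sigma,\mu^{0},\sigma^{0}$ are real and that the products in the impedance are bilinear, so that conjugating $L^{*}$ and exchanging the two symmetric slots reproduces the integrand). By the adjoint equation \eqref{eq:fvAdjoint3d} together with $\mathcal{S}^{*}(\,\cdot\,;\,\cdot\,)=\overline{\mathcal{S}(\,\cdot\,;\,\cdot\,)}$ with arguments swapped, this equals $\frac{\mi\omega}{|J|^{2}}\,\mathcal{S}(\bmB_{k},U_{k};\bmP_{l},W_{l})$; and since $(\bmB_{k},U_{k})$ solves \eqref{eq:fvDerivative3dBis}, choosing the admissible test pair $(\bmvarPsi,\varPhi)=(\bmP_{l},W_{l})\in\X$ turns it into $\frac{\mi\omega}{|J|^{2}}\,\mathcal{L}(\bmP_{l},W_{l})$, with $\mathcal{L}$ read off from \eqref{eq:RHSfvDerivative3dTris}. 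At this point the unknown $(\bmB_{k},U_{k})$ has disappeared and only the precomputable fields $(\bmA_{k},V_{k})$, $(\bmA^{0}_{l},V^{0}_{l})$ and the adjoint $(\bmP_{l},W_{l})$ remain.

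Next I would gather the volume integrals over $\Omega_{d}$ contained in $\mathcal{L}(\bmP_{l},W_{l})$—where $(\bmtheta\cdot\grad)\bmA_{k}+(\grad\bmtheta)^{t}\bmA_{k}$ is paired with $\curl\overline{\bmP_{l}}$ and with $\overline{\mi\omega\bmP_{l}+\grad W_{l}}$—together with the correction volume integral of \eqref{eq:impedanceDerivative3dBis}, in which the same field meets $\curl\bmA^{0}_{l}$ and $\mi\omega\bmA^{0}_{l}+\grad V^{0}_{l}$. Integrating the curl- and gradient-terms by parts and invoking the strong forms valid inside $\Omega_{d}$ (the deposit-free equation satisfied by $(\bmA^{0}_{l},V^{0}_{l})$ with coefficients $\mu^{0},\sigma^{0}$, the adjoint equation satisfied by $(\bmP_{l},W_{l})$, and $\divv(\sigma(\mi\omega\bmA_{k}+\grad V_{k}))=0$), the bulk integrands collapse so that only surface contributions on $\Gamma$ survive.

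Finally I would assemble all surface terms—the ones just produced, the boundary part of $\mathcal{L}(\bmP_{l},W_{l})$, and the $\Gamma$-integral already present in \eqref{eq:impedanceDerivative3dBis}—and simplify them with the adjoint transmission conditions \eqref{eq:PotentialAdj}. The condition $[\bmn\cdot\curl\bmP_{l}]=0$ keeps the normal traces consistent, while $[\mu^{-1}\curl\bmP_{l}\times\bmn]=-(\tfrac1\mu-\tfrac1{\mu^{0}})\curl\overline{\bmA^{0}_{l}}\times\bmn$ is exactly what produces the combinations $\tfrac1{\mu^{0}}(\curl\overline{\bmP_{l}})_{+}\times\bmn-\tfrac1{\mu^{0}}\curl\bmA^{0}_{l}\times\bmn$ and $\overline{\mi\omega\bmP_{l\tau}+\grad_{\tau}W_{l}}+\mi\omega\bmA^{0}_{l\tau}+\grad_{\tau}V^{0}_{l}$ of \eqref{eq:impedanceDerivative3dFinal}; the normal/tangential splitting of $\curl\bmA_{k}\cdot\curl\bmA^{0}_{l}$ on $\Gamma$ already used before \eqref{eq:impedanceDerivative3dBis} then yields the stated expression.

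I expect the main obstacle to be this last bookkeeping on $\Gamma$: since \eqref{eq:PotentialAdj} fixes only the jump of $\curl\bmP_{l}\times\bmn$ and not each one-sided trace, one must carefully track which trace (the ``$+$'' side) is meant, and keep the normal/tangential decomposition consistent with the convention that $\bmn$ points out of $\Omega_{d}$. The algebra that redistributes the $1/\mu$- and $1/\mu^{0}$-weighted tangential terms through $[\mu]$ and $[1/\mu]$ is where sign errors are most likely to appear; by contrast, the replacement of the material-derivative volume integral by the adjoint state is a direct substitution, which is presumably why the statement is advertised as an immediate consequence of Proposition~\ref{prop:impedanceDif3d} and the definition of the adjoint.
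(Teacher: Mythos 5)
Your proposal follows essentially the same route as the paper's own proof: you identify the leading volume integral of \eqref{eq:impedanceDerivative3dBis} with $\tfrac{\mi\omega}{|J|^{2}}\overline{L^{*}(\bmB_{k},U_{k})}$, convert it via the adjoint identity $\overline{\mathcal{S}^{*}(\bmP_{l},W_{l};\bmB_{k},U_{k})}=\mathcal{S}(\bmB_{k},U_{k};\bmP_{l},W_{l})=\mathcal{L}(\bmP_{l},W_{l})$, then combine the resulting volume terms with the correction integral, integrate by parts using $(\bmtheta\cdot\grad)\bmA_{k}+(\grad\bmtheta)^{t}\bmA_{k}=\curl\bmA_{k}\times\bmtheta+\grad(\bmtheta\cdot\bmA_{k})$ and the transmission conditions \eqref{eq:PotentialAdj} to reduce everything to the stated surface integral --- which is exactly the paper's computation culminating in \eqref{eq:AdjTestDifferenceShape}. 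Your flagged concern about tracking the one-sided trace $(\curl\overline{\bmP_{l}})_{+}$ and the sign conventions on $\Gamma$ is precisely where the paper's algebra is most delicate, so the proposal is sound and matches the published argument.
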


\subsection{Explicit shape gradient formula}
The shape derivative of $\mathbf{f}(\Omega_{d})$ is in the form
\begin{align}
  \mathbf{f}'(\Omega_{d})(\bmtheta) = -\dfrac{\omega}{|J|^{2}} \int_{\Gamma_{0}}(\bmn\cdot\bmtheta)g \d s,
\end{align}
where the shape-dependent function $g$ depends on the solutions to the forward problem
$(\bmA_{k}, V_{k})$, $(\bmA^{0}_{l}, V^{0}_{l})$ and the adjoint state
$(\bmP_{l}, W_{l})$. More precisely, $g=g_{11}+g_{21}$ for the absolute mode, and $g=g_{11}-g_{22}$ for the differential mode. For any $l$ and $k$ we have
\begin{align}\label{eq:gfunction3d}
  g_{kl} = \int_{\zeta_{\min}}^{\zeta_{\max}} & \Re \bigg( (\overline{Z(\Omega_{d};\zeta) - Z^{\natural}(\zeta)}) \bigg\{
    \left[\dfrac{1}{\mu}\right](\bmn\cdot\curl \bmA_{k})(\bmn\cdot\curl\overline{\bmP_{l}}-\bmn\cdot\curl \bmA^{0}_{l}) \notag \\
  & - [\mu] \left(\dfrac{1}{\mu}\curl \bmA_{k}\times \bmn\right)
    \cdot \left(\dfrac{1}{\mu^{0}}\curl\overline{\bmP_{l}}\times \bmn
    - \dfrac{1}{\mu^{0}}\curl \bmA^{0}_{l}\times \bmn\right) \notag \\
  & + \dfrac{1}{\mi\omega}[\sigma](\mi\omega \bmA_{k\tau}+\grad_{\tau}V_{k})
    \cdot(\overline{\mi\omega \bmP_{l\tau}+\grad_{\tau}W_{l}} + \mi\omega \bmA^{0}_{l\tau}+\grad_{\tau}V^{0}_{l})
    \bigg\}\bigg) \d \zeta.
\end{align}
We choose the shape perturbation $\bmtheta$ such that $ \bmtheta = g \bmn$  on the interface $\Gamma$, which is a descent direction since
\begin{align*}
  \mathbf{f}'(\Omega_{d})(\bmtheta) = -\dfrac{\omega}{|J|^{2}} \int_{\Gamma_{0}} |g|^{2} \d s \leq 0.
\end{align*}

\section{Numerical algorithms for the deposit reconstruction}\label{sec:num}
We recall that the computational domain $\Omega$ is a cylinder that contains
the tube and the SP. We introduce a family of triangulation $\mathcal{T}_{h}$ of $\Omega$, the subscript $h$ stands for the largest length of the edges in $\mathcal T_{h}$. The tetrahedrons of $\mathcal T_{h}$ match on the interface between the conductive part (i.e. tube and SP $\sigma\neq 0$) and the insulator part ($\sigma=0$). The triangulation of the conductive parts (with deposits region) is given in Fig.~\ref{imageandmesh} (see (A)-(B) for a real image and (A')-(B') for its F.E model).
\begin{figure}[htbp]
\centering
\begin{tabular}{cc}
\includegraphics[width=4cm,height=3cm]{./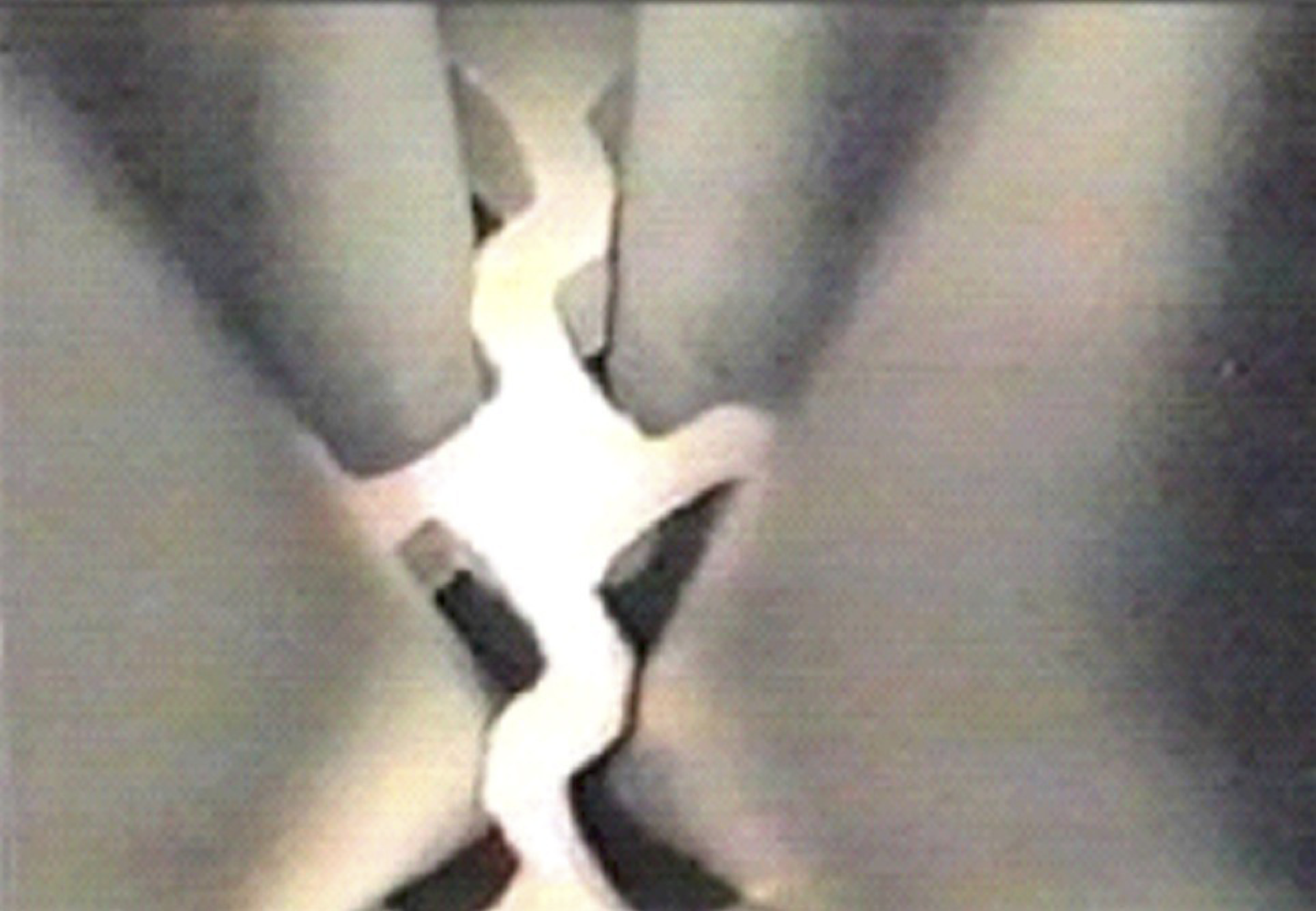}&
\includegraphics[width=4cm,height=3cm]{./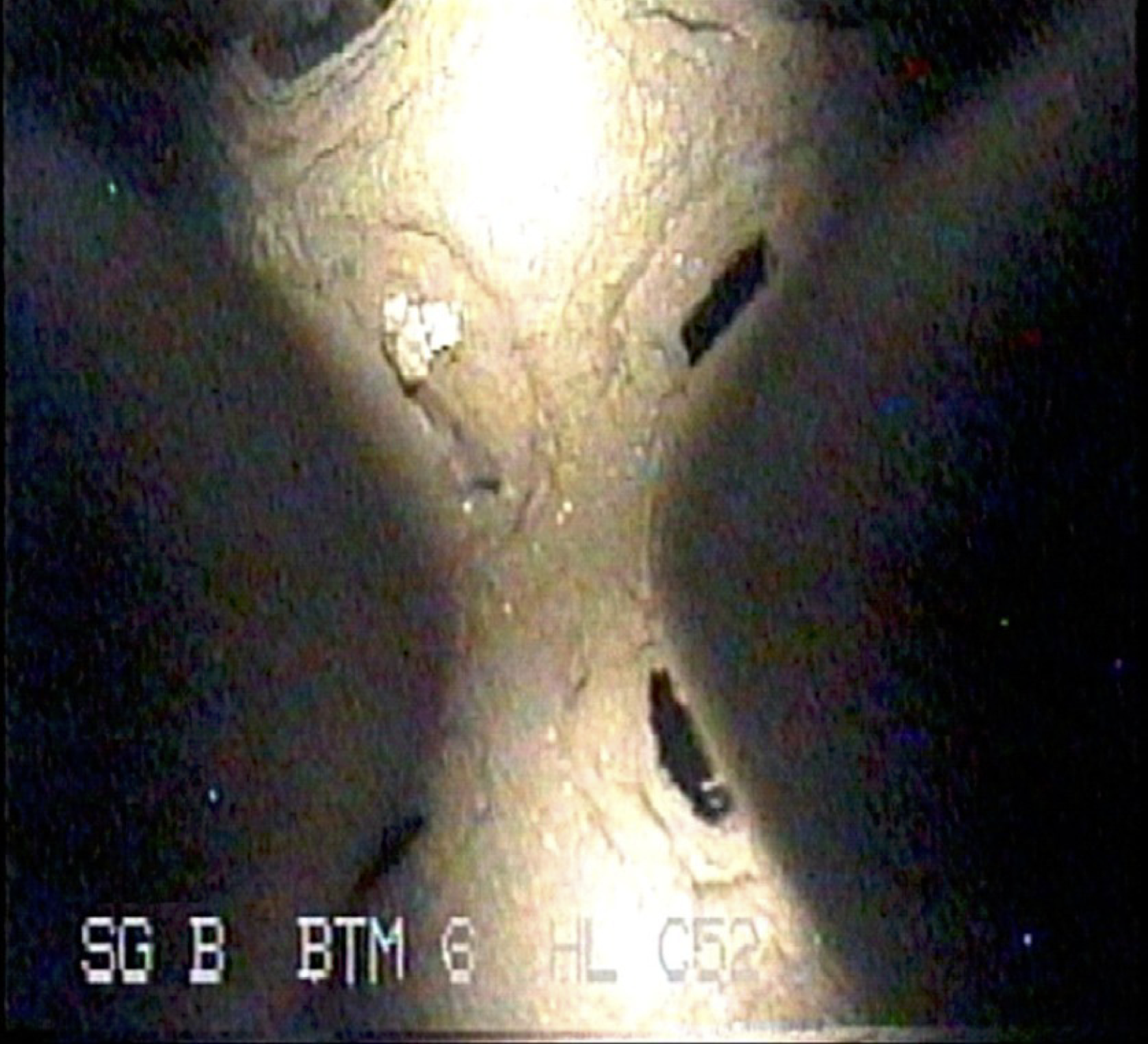}\\
(A)&(B)\vspace{.2in}\\
\includegraphics[width=4cm,height=3cm]{./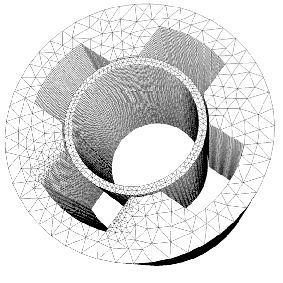}&
\includegraphics[width=4cm,height=3cm]{./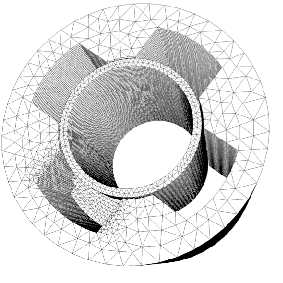}\\
(A')&(B')\\
\end{tabular}
\caption{Pictures downloaded from $\copyright$-Westinghouse:
  http://westinghousenuclear.com" . Figure (A) presents non clogging (healthy)
  SP, while Figure (B) presents a fully and partially clogged SP. Figures
  (A')-(B') are representations of used meshes for each configuration respectively.}
\label{imageandmesh}
\end{figure}
Since the variational space (of the regularized variational formulation) is
based on $H^1$ functions, the numerical finite elements approximation will be
based on nodal finite elements for the electric vector potential $V$ as well as
for the magnetic vector potential $\bmA$. We shall mainly use $\mathbb P_1$
Lagrange nodal elements for both.
In addition, the boundary conditions ($\bmA\cdot\bmn =0$ on $\partial\Omega$)
are taken into account via penalization of degrees of freedoms that belong to $\partial\Omega$. 
Indeed the same numerical approximation procedure is applied to adjoint states.

We now describe the gradient descent algorithm steps, the geometrical
parametrizations and the procedure to accelerate iterations steps. The deposit is
assumed to be located on the outer part of the tube and is concentrated (for
the non axisymmetric examples) in one opening part of the SP (see
Fig.~\ref{imageandmesh}-(B)). The reconstruction is based on an intuitive
approach, which consists in iteratively P0-approximating the geometry of the
deposit on a predefined 3D grid. This method avoid to reconstruct the mesh at
each inversion iteration. The predefined grid is defined by $\mathcal{N}_{\underline{h}}=\{\mathcal T_h^d\subset\mathcal T_h, \text{ s.t } \forall \text{ simplex } K\in \mathcal T_h^d\   \text{ with a facet parallel to Tube} \}$, where $\underline h$ stands for the resolution of the grid. We give in Fig.~\ref{GridClipping} a clipping of $\mathcal{N}_{\underline{h}}$.
 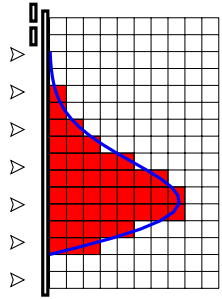
\begin{figure}[htbp]
 \centering
\begin{tikzpicture}[scale=0.45]
\begin{scope}
\draw [fill=red] (0,0) rectangle (1.5,.5);%
\draw [fill=red] (0,.5) rectangle (2.5,1);%
\draw [fill=red] (0,1) rectangle (4,1.5);%
\draw [fill=red] (0,1.5) rectangle (4,2);%
\draw [fill=red] (0,2) rectangle (3.5,2.5);%
\draw [fill=red] (0,2.5) rectangle (2.5,3);
\draw [fill=red] (0,3) rectangle (1.5,3.5);
\draw [fill=red] (0,3.5) rectangle (1,4);
\draw [fill=red] (0,4) rectangle (.5,4.5);
\draw [fill=red] (0,4.5) rectangle (.5,5);
\draw[thick,very thin] (0,-1) grid [step=.5cm] (5,7);
\end{scope}
\begin{scope}[domain=-6:0]
\draw[very thick,color=blue,rotate=-90] plot (\x,{-4*\x*exp(-.2*\x*\x)}) ;
\end{scope}
\begin{scope}[decoration={shape backgrounds,shape=dart,shape size=5pt, shape sep=5mm,shape sloped=false}]
\draw [black] decorate {(-1,-.75) -- (-1,6.75)};
\end{scope}
\begin{scope}
\draw[very thick] (-.2,-1.2) rectangle (-.05,7.2);
\end{scope}
\begin{scope}
\draw[very thick] (-.55,6.2) rectangle (-.4,6.7);\draw[very thick] (-.55,6.9) rectangle (-.4,7.4);
\end{scope}
\end{tikzpicture}
\hspace{1in}
\caption{Sketch of the deposit shape reconstruction (clipping of 3D representation)
  on a grid $G_{\underline{h}}$. The reconstruction uses an invariant grid where P0-interpolation over this grid is set to cary out the new shape profile.
}

\label{GridClipping}
\end{figure}
We present in {\bf Algorithm~\ref{algo1}} the instances of an adapted
step gradient descent. It is well known that the fixed step gradient descent
algorithm converges if the step is sufficiently small. In our case we
will allow the step descent to be large at least for the first iterations, and
if the algorithm fails to maintain the decreasing of the cost functional, the
step is reduced by a given factor. 
  $1/2<\delta<1$. The final geometry is the one for which no local variation
  (on the predefined grid) decreases the cost functional. 
\begin{algorithm}\tiny
\LinesNumbered
\DontPrintSemicolon
\SetAlgoLined
\KwData{The impedance signal response of the tested configuration $\Omega_d^\star$}
\KwResult{Optimal shape approximation using interpolation on 3D grid}
\KwIn{ The resolution of the predefined grid $\underline{h}$, ,Threshold: $\epsilon$\;}
\KwIn{ $L^0$ Table of size $N$, a real $t$ and $\dfrac 1 2 < \delta <1$\;}
\KwIn{$P=\mathrm{E}\big(log(0.4*\underline{h})\slash log(\delta)\big)$\;}
{\bf Build the 3d mesh grid $\mathcal{N}_{\underline{h}}: m_x\underline{h}\times m_y\underline{h}\times N$\;}
	Evaluate the cost function $\mathbf{f}(L^{0})$ and the gradient $\nabla\mathbf{f}(L^{0})$\;		
	$k=0$\;
\While{$\|\nabla \mathbf{f}(L^{k})\|_2>\epsilon$}
	{
	$t^{k}=\underline{h} \slash \max_{1\leq n\leq N}|\nabla\mathbf{f}(L_{n}^{k})|$\;
	\For{$1\leq p \leq P$}
		{
	   	$t^{p}=\delta\times t^{p-1}$\;
		$L^{p} = L^{k} -t^p\nabla\mathbf{f}(L^{k})$\;
                Project the $L^{p}$ (to the nearest value) on the predefined grid;
		Evaluate the cost function $\mathbf{f}(L^{p})$ \;		
		\If{$\mathbf{f}(L^{p}) < \mathbf{f}(L^{k})$}
		{
			Update $\mathbf{f} (L^{k+1}_{})=\mathbf{f} (L^p)$\;
			Evaluate the gradient $\nabla\mathbf{f}(L^{k+1})$\;
			
			{\bf Break}\;
		}
		\If{p==P}
		{
			Print ``A singular point is attained"\;
	                {\bf Exit}()\;
		}
		}

		$k=k+1$\;			
	}
\caption{Gradient descent algorithm}\label{algo1}
\end{algorithm}

\section{Numerical implementation and validation}\label{sec:exp}

	Numerical validation of the presented method is considered in this
        section. We use the software FreeFem++ ~\cite{MR3043640} to
        deal with the finite elements discretization of the problem. We run our script on a cluster with
        distributed memory configuration. We use a direct matrix-inversion of the linear system where the factorization is achieved using sparse parallel solver (MUMPS~\cite{MUMPS:1,MUMPS:2}).
	We present and explain in the sequel some particular techniques to
        achieve performance of the direct eddy-current solver (and consequently the inverse solver).
	
	 At each probe position we have to compute a solution associated  to
         different source term.  In order to (numerically) ensure
         divergence free condition for the source term one has to exactly mesh
         the support of the coil. If we build a new mesh related to the new probe
         position, we have to assemble new matrices and solve new systems,
         which are extremely memory-consuming. We therefore avoid this by
         creating and use a unique mesh that incorporates all possible probe
         positions in a scan of the tube. This allows us to only modify the
         right hand side of the system at each coil position. The factorization
         of the matrix is done only once per iteration. In order to further
         accelerate the resolution we also parallelize the matrix assembly
         since the cost of this part appeared to be the more expensive part if not
         done in parallel. Particular attention must be taken for the non-homogeneity (change of the conductivities and the permeability in the domain):  We declare the variables $\sigma$ and $\mu$ as P0-Lagrange finite elements that depends on the elements labels of the non-partitioned mesh. Then, we apply a graph partitioning (e.g. scotch~\cite{Pellegrini01scotchand} or metis~\cite{Karypis95metis}) to create automatically partitioned new mesh. Since the partitioning process changes the elements labels to the ranks of the used group of processors, we define the P0-Lagrange non-homogeneous domain variable on the non-partitioned mesh and then include them in the variational formulation that admits the partitioning (see ~\cite{haddar:hal-01044648} for more technical details).
				
		Numerical experiments deal with several configurations of test cases. Mainly we present an axisymmetric configuration, then we add the SP and consider the case where one of the SP foils (flow path) is clogged.
		
		The geometry of the computational domain includes a tube with respective internal and external radius $9.84$ mm and $11.11$ mm. The coils are modeled by a crown with respective internal and external radius $7.83$ mm and $8.50$ mm. Both coils have length $2$ mm and are separated by $0.5$ mm. The scan step of coils is fixed to $1$ mm and cover $20$ positions along the tube, which length has been limited to $30$ mm.
		
		We used the following values of the electromagnetic parameters. The frequency $\omega=200 \pi$, the magnetic permeability of the vacuum $\mu_0=\pi\times 10^{-6}$, magnetic permeability of the tube $\mu_t=1.01\mu_0$, the magnetic permeability of the SP $\mu_{SP}=\mu_0$ and the magnetic permeability of the deposit $\mu_d=\mu_0$. The conductivity is taken $\sigma_t=1\times 10^{3}$ for the tube, $\sigma_{SP}=1\times 10^{2}$ for the SP and $\sigma_d=\sigma_t$ for the deposit.
					
		In all numerical experiments, the initialization of our algorithm takes a deposit with the lowest layers in the grid $G_{\underline{h}}$ i.e. with depth $0.5$ mm equal to ${\underline{h}}$ : the precision of the fixed grid.
		

\subsection{Axisymmetric and non-axisymmetric geometries}
In this part we consider two configurations of deposits in the vicinity of the
tube: deposits around the tube far from SP and a deposit in one opening of
water traffic lane of the SP. The first case, represents an axisymmetric
configuration~\cite{jiang:hal-00741616} and the second case represents a
non-axisymmetric configuration because of the presence of SP and the deposit.
We present in Figure~\ref{GeoAxis} a slice on the plane (x,z) of the 3D
computational domain. We show the shape of the axisymmetric deposits
$\Omega_d^\star$ and the estimated deposits $\Omega_d^k$ result of the
inversion algorithm. Together with this plot we add the y-component of the solution $\bmE_k$ to show the penetration of the electromagnetic wave inside the tube and the deposits. With respect to $k$, a series of measured responses of the estimated deposit $\Omega_d^k$ is presented in Figure~\ref{SeriesResponseAxis}. This shows the convergence of the method in the sense of minimizing the misfit function~\eqref{eq:leastSquare} presented in Figure~\ref{Jaxi}.
\begin{figure}[pt]
\centering
  \begin{tabular}{cc}
\centering  \includegraphics[width=6cm,height=5.4cm]{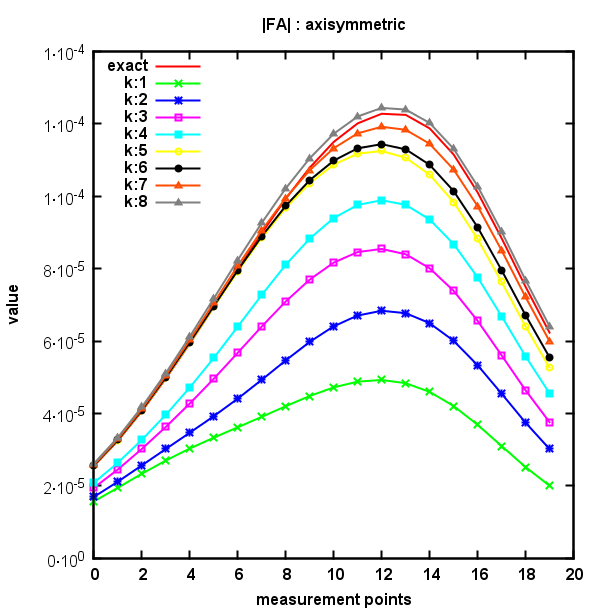},
\centering  \includegraphics[width=6cm,height=5.4cm] {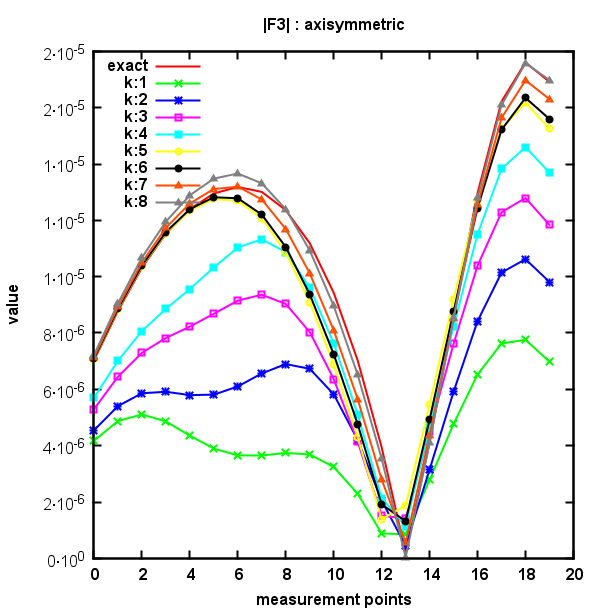}
  \end{tabular}
  \caption{History of the impedance responses of the deposits during iterations
    in the axisymmetric configuration. $|FA|$ measurements (left) and $|F3|$ measurements (right).}
  \label{SeriesResponseAxis}
\end{figure}
\begin{figure}[htpb]
  \begin{tabular}{cc}
\includegraphics[scale=.3]{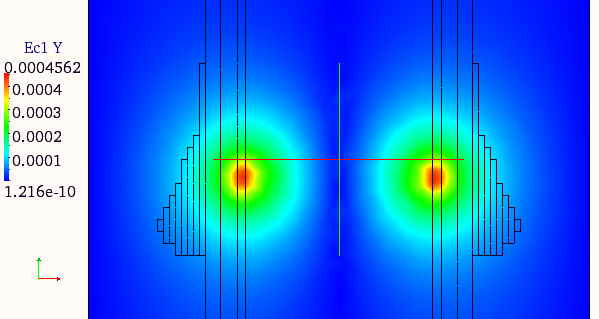}&
\includegraphics[scale=.3]{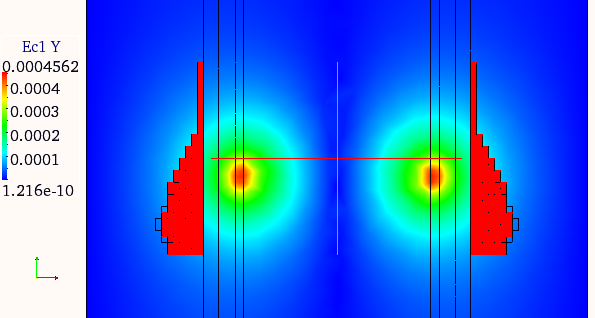}
    \end{tabular}
  \caption{Slice on x-z-plan of the computational domain showing the plot of the y-component of the eddy-current solution and the deposits $\Omega_d^\star$ (on the left) and the reconstruction in red (on the right).}
  \label{GeoAxis}
\end{figure}

\begin{figure}[htbp]
\centering
\includegraphics[width=6cm,height=5.4cm] {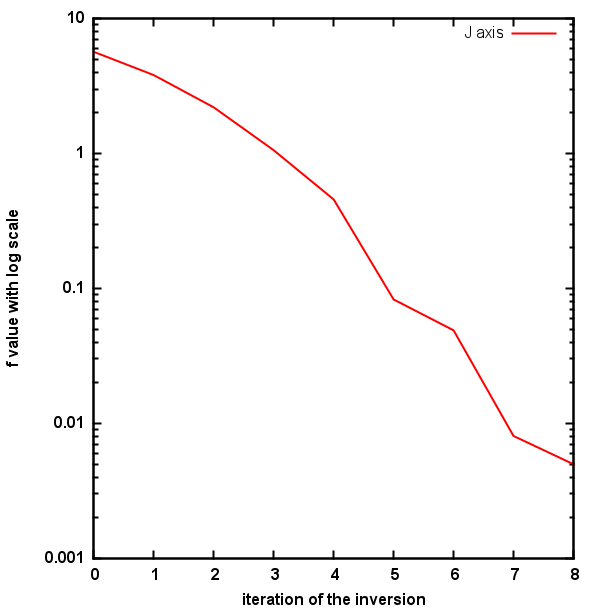}
\caption{Objective function with respect to iterations in the axisymmetric case.}
\label{Jaxi}
\end{figure}
A more complex configuration consists in taking into account the presence of SP
and therefore non symmetric deposit. The results for this configuration are
presented as follows: In Figure~\ref{GeoSP1}  we plot a slice, on the plane (x,z), of the y-component of the solution $\bmE_k$ together with the shape profile of the deposits $\Omega_d^\star$ and its estimation $\Omega_d^k$ result of the inversion algorithm.
\begin{figure}[htp]
\centering
  \begin{tabular}{cc}
\centering  \includegraphics[width=6cm,height=5.4cm]{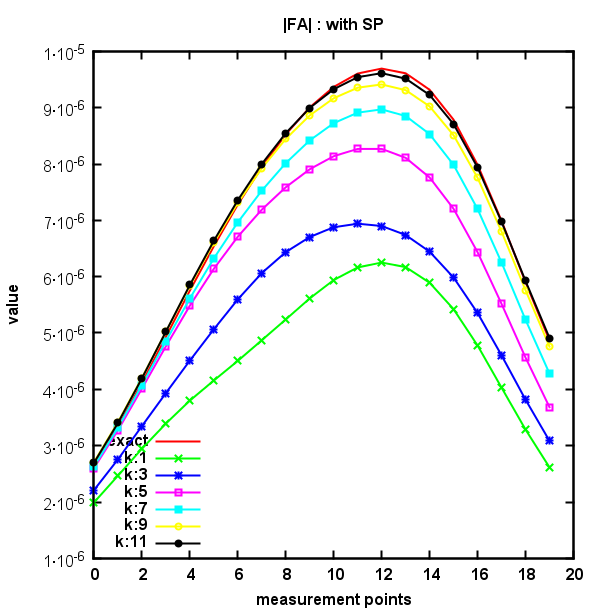},
\centering  \includegraphics[width=6cm,height=5.4cm] {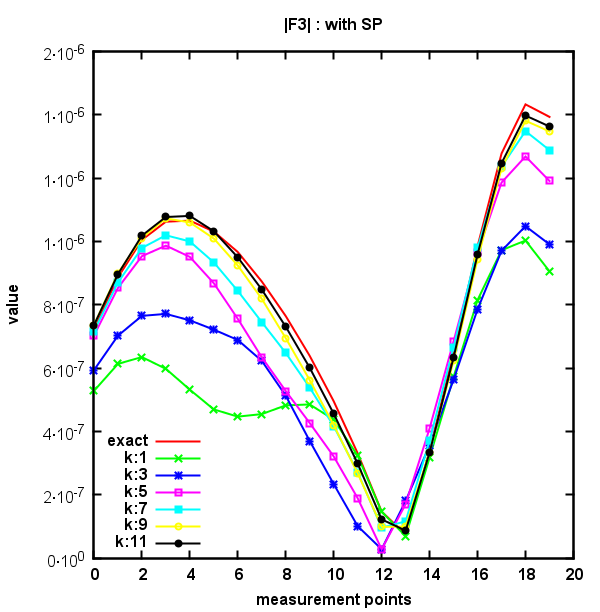}
  \end{tabular}
   \caption{History of the impedances during iterations for the SP configuration. $|FA|$ measurements (left) and $|F3|$ measurements (right).}
  \label{SeriesResponseSP}
  \end{figure}
 The series of the impedance signal responses are given with respect to $k$ in Figure~\ref{SeriesResponseSP}. This highlights the convergence of our algorithm even with the presence of noise in the non symmetric solution (y-component of $\bmE_k$) as it can be seen in Figure~\ref{GeoSP1} and also on the left plot of Figure~\ref{GeoSP2}.
\begin{figure}[htbp]
\centering
  \begin{tabular}{cc}
\includegraphics[scale=.35]{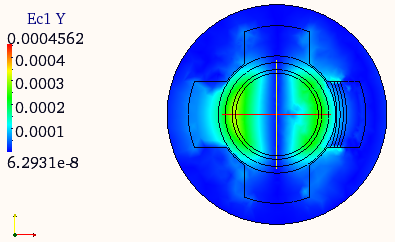}&
\centering\includegraphics[width=6cm,height=5.4cm]{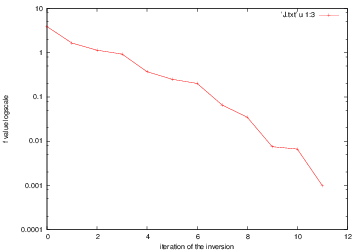}
    \end{tabular}
    \caption{(left) Slice representation of the computational domain of the
      configuration with SP and a plot of the z-component of the eddy-current
      solution on x-y-plan. (right) Misfit function in terms of the inversion iterations.}
\label{GeoSP1}
    \end{figure}

\begin{figure}[htbp]
 \begin{tabular}{cc}
 \includegraphics[scale=.4]{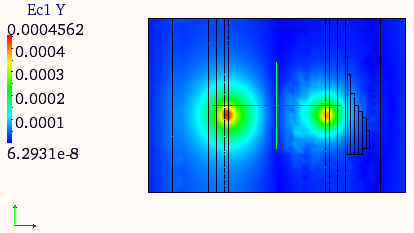}&
\centering\includegraphics[scale=.4]{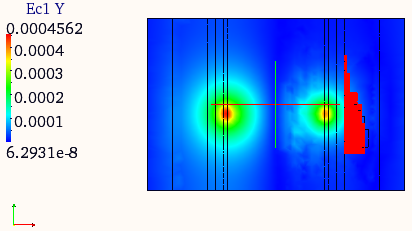}
    \end{tabular}
  \caption{Slice representation of the computational domain of the configuration with SP and a plot of the y-component of the eddy-current solution on x-z-plan.}
    \label{GeoSP2}
\end{figure}

\subsection{Arbitrary deposit shape}
In this subsection in addition to the presence of the SP, we consider the
reconstruction of a deposits with an arbitrary shape that does not match the
parametrization used for the inverse problem: see Figure~\ref{ShapeNoise}. The
results of the inversion algorithm is given (in terms of $k$) in Figure~\ref{GeoSPShape}. The convergence in the sense of the impedance response measurements is given in Figure~\ref{SeriesSPShape}. The minimization of the objective function with respect to the iterations of the inversion is presented in Figure~\ref{JSPNoiseShape}.

\begin{figure}[htp]
\centering
\begin{tabular}{lr}
\includegraphics[width=6cm,height=5.2cm]{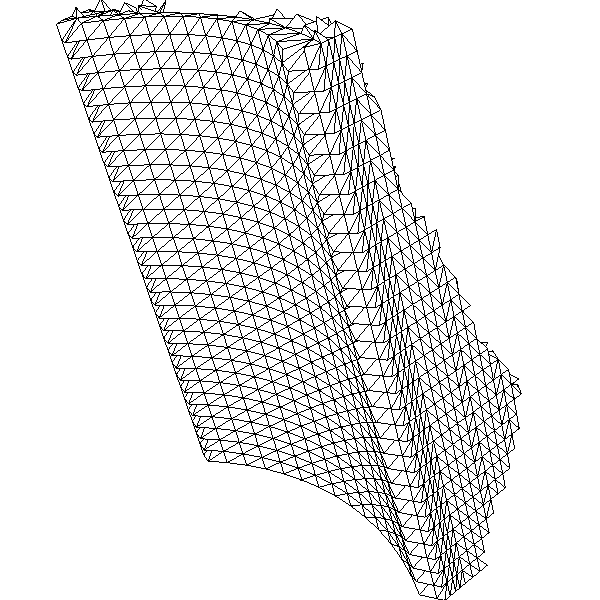}&
\includegraphics[width=6cm,height=5.2cm]{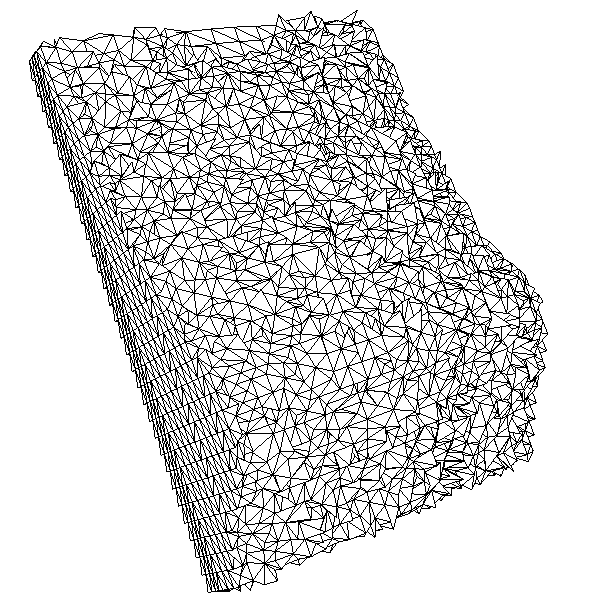}
    \end{tabular}
    \caption{Graph of the arbitrary shaped deposit that clogs one opening (foils) of the tube SP.}
    \label{ShapeNoise}
\end{figure}

\begin{figure}[htp]
  \begin{tabular}{cc}
\centering  \includegraphics[width=6cm,height=5.4cm]
{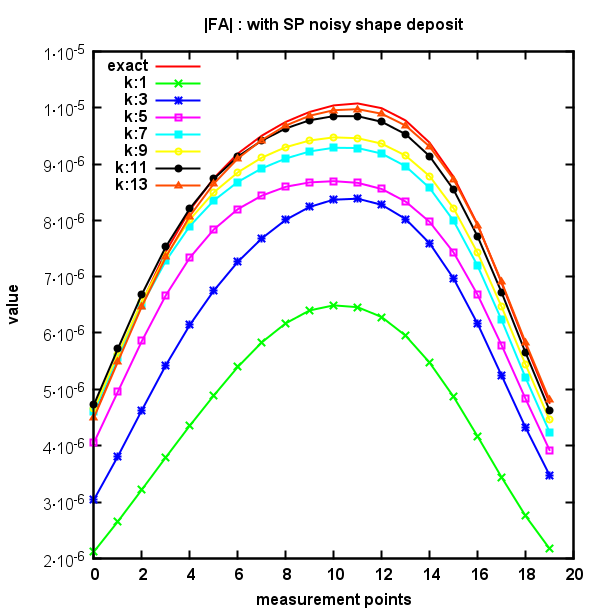},
\centering  \includegraphics[width=6cm,height=5.4cm] {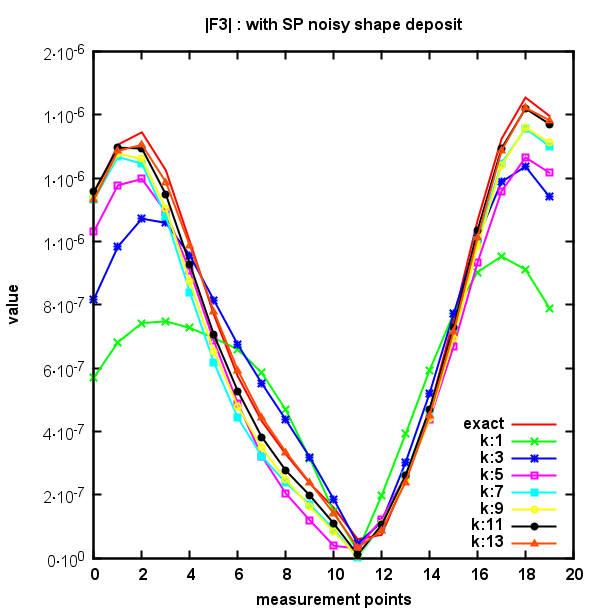}
  \end{tabular}
    \caption{History of the impedances in the case or deposit with arbitrary
      shape:  $|FA|$ measurement (left) and $|F3|$ measurement (right).}
    \label{SeriesSPShape}
\end{figure}

\begin{figure}[htbp]
\centering\includegraphics[width=6cm,height=5.4cm] {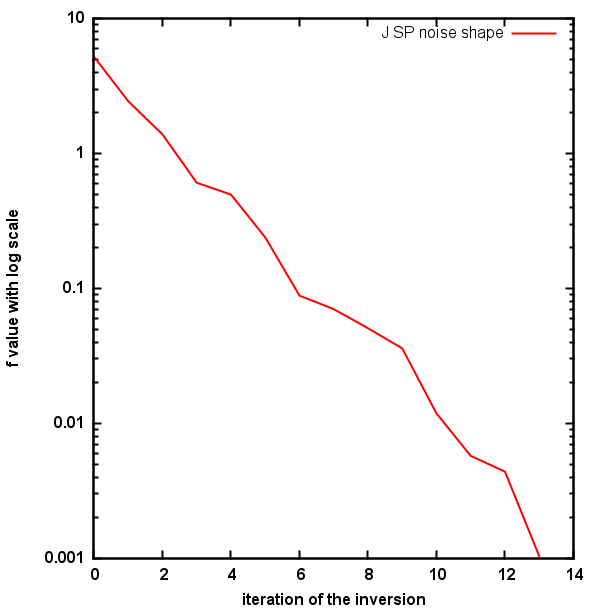}
\caption{Objective function with respect to iterations for the case of
  arbitrary shape reconstructions.}
\label{JSPNoiseShape}
\end{figure}

\begin{figure}[htp]
\begin{tabular}{ccc}
\includegraphics[scale=.2]{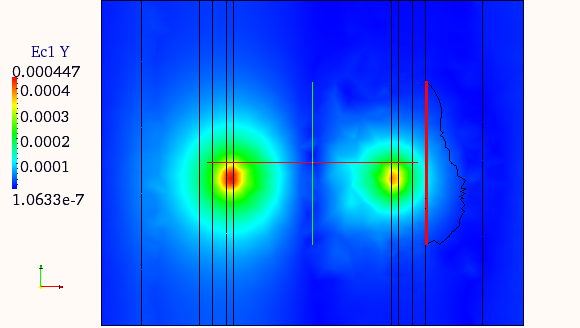}&\includegraphics[scale=.2]{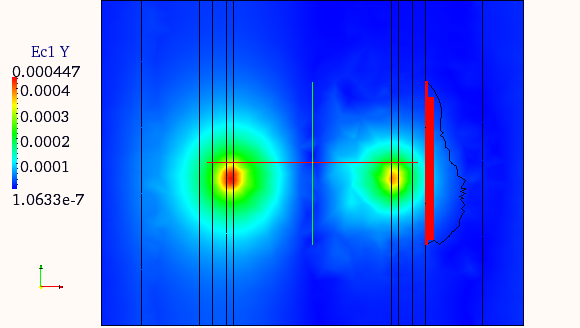}&\includegraphics[scale=.2]{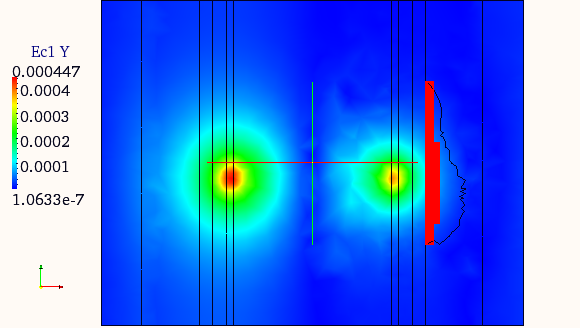}\\(k=0)&(k=2)&(k=4)\\
\includegraphics[scale=.2]{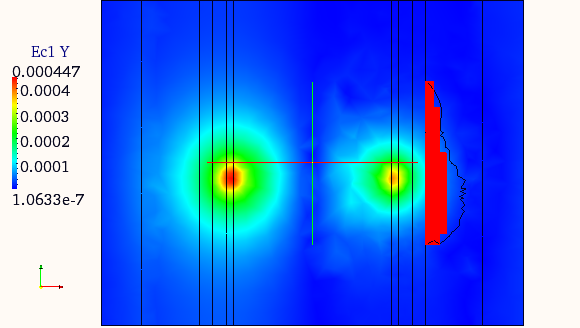}&\includegraphics[scale=.2]{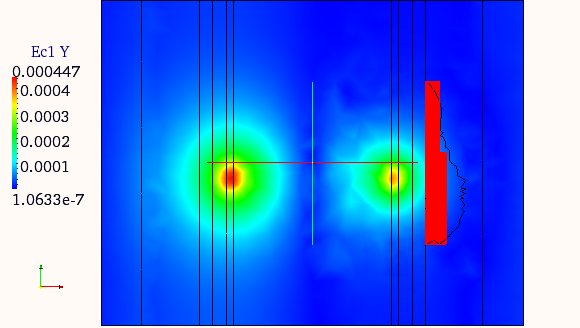}&\includegraphics[scale=.2]{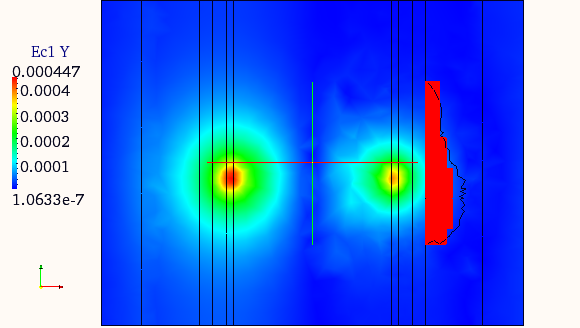}\\(k=6)&(k=8)&(k=10)\\
\includegraphics[scale=.2]{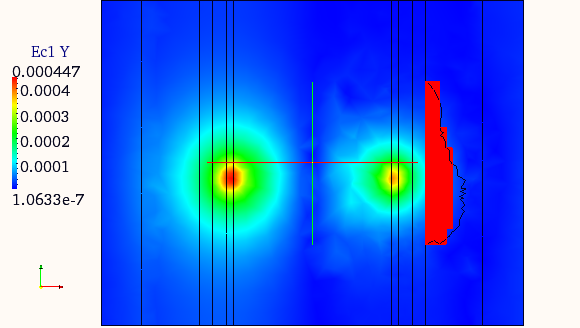}&\includegraphics[scale=.2]{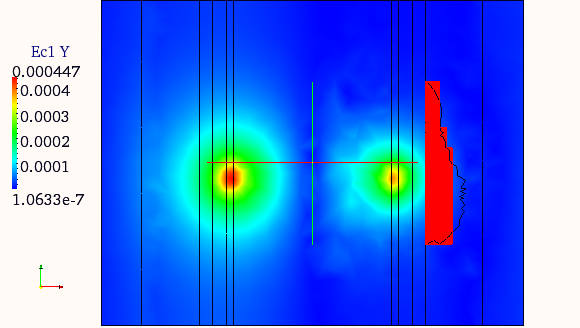}&\includegraphics[scale=.2]{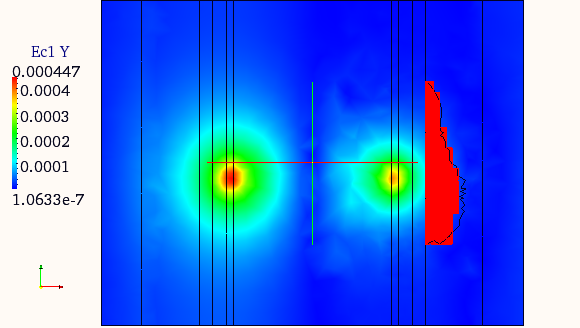}\\(k=11)&(k=12)&(k=13)
\end{tabular}
\caption{Slice representation of the iterations of the shape reconstruction:
  case of an arbitrary shape clogging one opening of the quatrefoil SP.}
\label{GeoSPShape}
\end{figure}




\Appendix
\section{Some useful differential identities}
\begin{subequations}\label{eq:VectorFormula}
  \begin{align}
    & \curl (\grad f) = 0, \label{eq:VectorFormulaCurl0}\\
    & \divv (\curl \bm{v}) = 0, \label{eq:VectorFormulaDivv0}\\
    & (\bm{u}\cdot\grad)\bm{v} = (\grad \bm{v}) \bm{u}, \label{eq:VectorFormula1}\\
    & \curl \bm{u} \times \bm{v} = (\grad \bm{u} - (\grad \bm{u})^{t}) \bm{v}, \label{eq:VectorFormula2}\\
    & \grad(\bm{u}\cdot \bm{v}) = \bm{u}\times\curl \bm{v} + \bm{v}\times\curl \bm{u}
      +(\bm{u}\cdot\grad)\bm{v}+(\bm{v}\cdot\grad)\bm{u}, \label{eq:VectorFormula3}\\
    & \curl(\bm{u}\times \bm{v}) = \bm{u}\divv \bm{v} - \bm{v}\divv \bm{u}
      +(\bm{v}\cdot\grad) \bm{u} - (\bm{u}\cdot\grad) \bm{v}. \label{eq:VectorFormula4}
  \end{align}
\end{subequations}
\appendix
\section{Proof of Lemma~\ref{Chap5_lemm:compute3d}}
We develop the proof of the shape derivative calculus presented at Lemma \ref{Chap5_lemm:compute3d}
\begin{proof}
By definition, one has
\begin{align*}
  \sfa(\Omega_{\theta})\big(\bmA,V;\bmPsi,\Phi\big)
    = & \int_{\Omega_{\theta}}\dfrac{1}{\mu}\curl_{y} \bmA(\Omega_{\theta}) \cdot \curl_{y} \overline{\bmPsi(\Omega_{\theta})} \d y \notag \\
  & + \dfrac{1}{\mi\omega}\int_{\Omega_{\theta}} \sigma(\mi\omega\bmA(\Omega_{\theta}) + \grad_{y} V(\Omega_{\theta}))
    \cdot (\overline{\mi\omega\bmPsi(\Omega_{\theta}) + \grad_{y}\Phi(\Omega_{\theta})}) \d y.
\end{align*}
With the variable substitution $(\Id + \bmtheta)^{-1}: y\mapsto x$ and the identities \eqref{eq:DifChangeVariable}
related to $A_{\curl}$, $A_{\divv}$ and $V_{\grad}$, we rewrite the above form
on a fixed reference domain $\Omega = (\Id + \bmtheta)^{-1}\Omega_{\theta}$ as
\begin{align*}
  & \sfa(\Omega_{\theta})\big(\bmA,V;\bmPsi,\Phi\big)
  = \int_{\Omega} \dfrac{1}{\mu}\dfrac{(I+\grad\bmtheta)^{t}(I+\grad\bmtheta)}{|\det(I+\grad\bmtheta)|}
      \curl\bmA_{\curl} \cdot \curl \overline{\bmPsi_{\curl}} \d x \notag\\
  &+ \dfrac{1}{\mi\omega}\int_{\Omega}\sigma |\det(I+\grad\bmtheta)|(I+\grad\bmtheta)^{-1}(I+\grad\bmtheta)^{-t}
     \big(\mi\omega\bmA_{\curl} + \grad V_{\grad}\big)
     \cdot \big(\overline{\mi\omega\bmPsi_{\curl} + \grad\Phi_{\grad}}\big) \d x.
\end{align*}
If $(\bmB(\bmtheta), U(\bmtheta))$, $(\bmeta(\bmtheta), \chi(\bmtheta))$
are respectively the material derivatives of $(\bmA,V)$ and $(\bmPsi, \Phi)$,
then one can develop the above form with respect to $\bmtheta$
by considering the developments \cite{de2006conception}
\begin{subequations}\label{eq:thetaDevelop}
  \begin{align}
  & |\det(I+\grad\bmtheta)| = 1+\divv\bmtheta + o(\bmtheta), \\
  & (I + \grad\bmtheta)^{-1} = I - \grad\bmtheta + o(\bmtheta).
  \end{align}
\end{subequations}
Since $(\bmA_{\curl}(0),V_{\grad}(0)) = (\bmA(\Omega),V(\Omega))$, $(\bmPsi_{\curl}(0),\Phi_{\grad}(0) = \Phi(\Omega))$,
the terms of order zero with respect to $\bmtheta$ in the development give exactly $\sfa(\Omega)(\bmA,V;\bmPsi,\Phi)$,
while the first order terms with respect to $\bmtheta$ yield
\begin{align}\label{eq:middleI1I2}
  & \sfaprime(\Omega)(\bmA,V;\bmPsi,\Phi)
    = \sfa(\Omega)(\bmB(\bmtheta),U(\bmtheta);\bmPsi,\Phi)
    + \sfa(\Omega)(\bmA,V;\bmeta(\bmtheta),\chi(\bmtheta)) + \mathcal{I}_{1} + \mathcal{I}_{2}, \notag \\
  \text{with}\quad
  & \mathcal{I}_{1} = \int_{\Omega}\dfrac{1}{\mu}(-\divv \bmtheta + \grad\bmtheta + (\grad\bmtheta)^{t})
    \curl\bmA\cdot\curl\overline{\bmPsi} \d x, \notag \\
  & \mathcal{I}_{2} = \dfrac{1}{\mi\omega} \int_{\Omega}\sigma(\divv\bmtheta I - \grad\bmtheta - (\grad\bmtheta)^{t})
  \big(\mi\omega\bmA + \grad V\big)\cdot \big(\overline{\mi\omega\bmPsi + \grad\Phi}\big) \d x.
\end{align}
We will rewrite the volume integrals $\mathcal{I}_{1}$, $\mathcal{I}_{2}$ in terms of boundary integrals.
Using the differential identities \eqref{eq:VectorFormula}
and the fact that $(\bmA,V)$ satisfy the conditions \eqref{eq:lemmCompute}, one verifies
\begin{align*}
  & (-\divv\bmtheta I + \grad\bmtheta + (\grad\bmtheta)^{t})\curl \bmA
   = - \curl((\bmtheta\cdot\grad)\bmA + (\grad\theta)^{t}\bmA)
    + \grad(\bmtheta\cdot\curl\bmA) + \mu\sigma(\mi\omega\bmA + \grad V)\times \bmtheta.
\end{align*}
Hence
\begin{align*}
  & \mathcal{I}_{1} =
    - \int_{\Omega}\dfrac{1}{\mu}\curl((\bmtheta\cdot\grad)\bmA + (\grad\theta)^{t}\bmA) \cdot \curl\overline{\bmPsi} \d x
    + \mathcal{I}_{11} + \mathcal{I}_{22}, \notag \\
  & \text{where} \quad \mathcal{I}_{11} = \int_{\Omega}\dfrac{1}{\mu}\grad(\bmtheta\cdot\curl\bmA)\cdot \curl\overline{\bmPsi} \d x
    \quad \text{and} \quad
    \mathcal{I}_{12} = \int_{\Omega}\sigma \big((\mi\omega\bmA + \grad V)\times \bmtheta\big)\cdot \curl\overline{\bmPsi} \d x.
\end{align*}
By Stoke's theorem, one has
\begin{align*}
  \mathcal{I}_{11} = \int_{\Omega}\dfrac{1}{\mu} \divv\big((\bmtheta\cdot\curl\bmA)\curl\overline{\bmPsi}\big) \d x
    = \int_{\partial\Omega} \dfrac{1}{\mu}(\bmtheta\cdot\curl\bmA)(\bmn\cdot\curl\overline{\bmPsi}) \d s.
\end{align*}
By integration by parts (with use of differential identities \ref{eq:VectorFormula}), we verify
\begin{align*}
  \mathcal{I}_{12}
  = & - \dfrac{1}{\mi\omega}\int_{\Omega} \sigma \big\{
    (\divv\bmtheta I - \grad\bmtheta)(\mi\omega\bmA + \grad V)
    + (\bmtheta\cdot\grad)(\mi\omega\bmA + \grad V) \big\}\cdot (\overline{\mi\omega\bmPsi}) \d x \notag \\
  & + \dfrac{1}{\mi\omega}\int_{\partial\Omega} \sigma (\bmtheta\cdot \bmn)
    (\mi\omega\bmA + \grad V)\cdot(\overline{\mi\omega\bmPsi}) \d s.
\end{align*}
Therefore
\begin{align}\label{eq:finalI_1}
  \mathcal{I}_{1}
  = & - \int_{\Omega}\dfrac{1}{\mu}\curl((\bmtheta\cdot\grad)\bmA + (\grad\theta)^{t}\bmA) \cdot \curl\overline{\bmPsi} \d x \notag \\
  & - \dfrac{1}{\mi\omega}\int_{\Omega} \sigma \big\{
    (\divv\bmtheta I - \grad\bmtheta)(\mi\omega\bmA + \grad V)
    + (\bmtheta\cdot\grad)(\mi\omega\bmA + \grad V) \big\}\cdot (\overline{\mi\omega\bmPsi}) \d x \notag \\
  & + \int_{\partial\Omega} \dfrac{1}{\mu}(\bmtheta\cdot\curl\bmA)(\bmn\cdot\curl\overline{\bmPsi}) \d s
    + \dfrac{1}{\mi\omega}\int_{\partial\Omega} \sigma (\bmtheta\cdot \bmn)
    (\mi\omega\bmA + \grad V)\cdot(\overline{\mi\omega\bmPsi}) \d s.
\end{align}
Now we compute the term $\mathcal{I}_{2}$
\begin{align*}
  & \mathcal{I}_{2} =  \dfrac{1}{\mi\omega}\int_{\Omega} \sigma(\divv\bmtheta I - \grad\bmtheta - (\grad\bmtheta)^{t})
    (\mi\omega\bmA + \grad V)\cdot(\overline{\mi\omega\bmPsi}) \d x
    + \mathcal{I}_{21} + \mathcal{I}_{22}, \notag \\
  \text{with} \quad & \mathcal{I}_{21} = \dfrac{1}{\mi\omega}\int_{\Omega}\sigma \divv\bmtheta
    (\mi\omega\bmA + \grad V)\cdot \grad\overline{\Phi} \d x,
    \quad
    \mathcal{I}_{22} = \dfrac{1}{\mi\omega}\int_{\Omega} \sigma(- \grad\bmtheta - (\grad\bmtheta)^{t})
    (\mi\omega\bmA + \grad V)\cdot \grad\overline{\Phi} \d x.
\end{align*}
By integration by parts, one obtains
\begin{align*}
  \mathcal{I}_{21}
  = & \dfrac{1}{\mi\omega} \int_{\partial\Omega} \sigma(\bmtheta\cdot\bmn)
    (\mi\omega\bmA + \grad V)\cdot\grad\overline{\Phi} \d s
    - \dfrac{1}{\mi\omega}\int_{\Omega} \mi\omega\sigma (\bmtheta\cdot\grad)\bmA\cdot \grad\overline{\Phi} \d x \notag \\
  & - \dfrac{1}{\mi\omega}\int_{\Omega} \sigma D^{2}\overline{\Phi}(\mi\omega\bmA + \grad V)\cdot\bmtheta \d x
    - \dfrac{1}{\mi\omega}\int_{\Omega} \sigma D^{2}V\bmtheta \cdot\grad\overline{\Phi} \d x.
\end{align*}
Using integration by parts and the fact that $\divv(\sigma(\mi\omega\bmA + \grad V))=0$
obtained by applying the divergence operator to \eqref{eq:lemmCompute}$_1$, one verifies
\begin{align*}
  \mathcal{I}_{22}
  = & - \dfrac{1}{\mi\omega}\int_{\Omega}
    \sigma(\grad\bmtheta)^{t} (\mi\omega\bmA + \grad V)\cdot \grad\overline{\Phi} \d x
    + \dfrac{1}{\mi\omega}\int_{\Omega}D^{2}\overline{\Phi}(\mi\omega\bmA + \grad V)\cdot \bmtheta \d x.
\end{align*}
From the differential identities \eqref{eq:VectorFormula}, one deduces also that
\begin{align*}
  D^{2}V\bmtheta + (\grad\bmtheta)^{t}\grad V = (\bmtheta\cdot\grad)\grad V + (\grad\bmtheta)^{t}\grad V
  = \grad(\bmtheta\cdot\grad V).
\end{align*}
The above equalities yield
\begin{align}\label{eq:finalI_2}
  \mathcal{I}_{2}  = & \mathcal{I}_{21} + \mathcal{I}_{22} \notag \\
  = & \dfrac{1}{\mi\omega}\int_{\Omega} \sigma(\divv\bmtheta I - \grad\bmtheta - (\grad\bmtheta)^{t})
    (\mi\omega\bmA + \grad V)\cdot(\overline{\mi\omega\bmPsi}) \d x
    + \dfrac{1}{\mi\omega} \int_{\partial\Omega} \sigma(\bmtheta\cdot\bmn)
    (\mi\omega\bmA + \grad V)\cdot\grad\overline{\Phi} \d s \notag \\
  & - \dfrac{1}{\mi\omega}\int_{\Omega} \sigma
    \mi\omega\big((\bmtheta\cdot\grad)\bmA + (\grad\bmtheta)^{t}\bmA\big)\cdot \grad\overline{\Phi} \d x
    - \dfrac{1}{\mi\omega}\int_{\Omega} \sigma \grad(\bmtheta\cdot\grad V) \cdot\grad\overline{\Phi} \d x.
\end{align}
\eqref{eq:finalI_1}, \eqref{eq:finalI_2} and the fact that $\sigma(\mi\omega \bmA + \grad V)\cdot\bmn = 0$ on $\partial\Omega$ imply
\begin{align} \label{eq:finalI}
  \mathcal{I}_{1} + \mathcal{I}_{2}
   = & \sfa(\Omega)\Big(-(\bmtheta\cdot\grad)\bmA - (\grad\theta)^{t}\bmA,- (\bmtheta\cdot\grad V);\bmPsi,\Phi\Big) \notag \\
   & + \int_{\partial\Omega} \dfrac{1}{\mu}(\bmtheta\cdot\curl\bmA)(\bmn\cdot\curl\overline{\bmPsi}) \d s
    + \dfrac{1}{\mi\omega}\int_{\partial\Omega} \sigma (\bmtheta\cdot \bmn)
    (\mi\omega\bmA_{\tau} + \grad_{\tau} v)\cdot(\overline{\mi\omega\bmPsi_{\tau} + \grad\Phi_{\tau}}) \d s.
\end{align}
From \eqref{eq:middleI1I2}, \eqref{eq:finalI} and the definition of shape derivatives
\eqref{eq:dAdVmaterial}, one concludes the result \eqref{eq:alphaDerivative}.
\end{proof}

\section{Proof of Proposition~\ref{prop:impedanceDerivative3d}}
We give the proof of the stated theorem \ref{prop:impedanceDerivative3d}
\begin{proof}
Taking $(\bmPsi, \Phi) = (\bmB_{k}(\bmtheta),U_{k}(\bmtheta)) \in \X$
in the adjoint problem \eqref{eq:fvAdjoint3d} yields
\begin{align*}
  & \mathcal{S}^{*}(\bmP_{l},W_{l};\bmB_{k}(\bmtheta),U_{k}(\bmtheta)) = L^{*}(\bmB_{k}(\bmtheta),U_{k}(\bmtheta)).
\end{align*}
On the other hand, taking $(\bmvarPsi,\varPhi) = (\bmP_{l}, W_{l})$
in the variational formulation \eqref{eq:fvDerivative3dBis}
for the material derivatives $(\bmB_{k}(\bmtheta), U_{k}(\bmtheta))$ implies
\begin{align*}
  \mathcal{S}(\bmB_{k}(\bmtheta),U_{k}(\bmtheta);\bmP_{l}, W_{l}) = \mathcal{L}(\bmP_{l}, W_{l}).
\end{align*}
Since
\begin{align*}
  \overline{\mathcal{S}^{*}(\bmP_{l},W_{l};\bmB_{k}(\bmtheta),U_{k}(\bmtheta))} = \mathcal{S}(\bmB_{k}(\bmtheta),U_{k}(\bmtheta);\bmP_{l}, W_{l})
\end{align*}
with the fact that $\divv \bmP_{l}=0$, one obtains
\begin{align*}
  & \overline{L^{*}(\bmB_{k}(\bmtheta),U_{k}(\bmtheta))} = \mathcal{L}(\bmP_{l}, W_{l}).
 \notag \\
  & = \int_{\Omega_{d}}\dfrac{1}{\mu}
    \curl\big((\bmtheta\cdot\grad)\bmA_{k} + (\grad\bmtheta)^{t}\bmA_{k}\big)\cdot\curl\overline{\bmP_{l}}\d x \notag \\
  & + \dfrac{1}{\mi\omega}\int_{\Omega_{\mathcal{C}}}\sigma\bigg(\mi\omega\big((\bmtheta\cdot\grad)\bmA_{k}+(\grad\bmtheta)^{t}\bmA_{k}\big)
    + \grad(\bmtheta\cdot\grad V_{k})\bigg)\cdot(\overline{\mi\omega \bmP_{l}+\grad W_{l}}) \d x\notag \\
  & + \int_{\Gamma}\left[\dfrac{1}{\mu}\right](\bmtheta\cdot\bmn)(\bmn\cdot \curl\bmA_{k}) (\bmn\cdot\curl\overline{\bmP_{l}})\d s
    + \dfrac{1}{\mi\omega} \int_{\Gamma}(\bmtheta\cdot\bmn)[\sigma](\mi\omega \bmA_{k\tau}
    + \grad_{\tau}V_{k})\cdot(\overline{\mi\omega \bmP_{l\tau}+\grad_{\tau}W_{l}})\d s.
\end{align*}
In $\Omega\backslash\Gamma$ one verifies
\begin{align*}
  & (\bmtheta\cdot\grad)\bmA_{k} + (\grad\bmtheta)^{t}\bmA_{k}
    = \curl\bmA_{k}\times\bmtheta + \grad(\bmtheta\cdot\bmA_{k}), \notag \\
  & \curl\big((\bmtheta\cdot\grad)\bmA_{k} + (\grad\bmtheta)^{t}\bmA_{k}\big)
    = \curl\big(\curl\bmA_{k}\times\bmtheta\big).
\end{align*}
Thus, considering \eqref{eq:PotentialAdj}$_4$ and \eqref{eq:PotentialAdj}$_5$, we compute
\begin{align}\label{eq:suan0}
  & \overline{L^{*}(\bmB_{k}(\bmtheta),U_{k}(\bmtheta))} \notag \\
  &  = \mathcal{I}
  + \int_{\Gamma}\left[\dfrac{1}{\mu}\right](\bmtheta\cdot\bmn)(\bmn\cdot \curl\bmA_{k}) (\bmn\cdot\curl\overline{\bmP_{l}})\d s
  + \dfrac{1}{\mi\omega} \int_{\Gamma}(\bmtheta\cdot\bmn)[\sigma](\mi\omega \bmA_{k\tau}
    + \grad_{\tau}V_{k})\cdot(\overline{\mi\omega \bmP_{l\tau}+\grad_{\tau}W_{l}})\d s, \notag \\
  & \text{with} \quad
  \mathcal{I} = \int_{\Omega_{d}}\dfrac{1}{\mu}
    \curl\big(\curl\bmA_{k}\times\bmtheta\big)\cdot\curl\overline{\bmP_{l}}\d x
    + \dfrac{1}{\mi\omega}\int_{\Omega_{\mathcal{C}}}\sigma\mi\omega\big(\curl\bmA_{k}\times\bmtheta\big)
    \cdot(\overline{\mi\omega \bmP_{l}+\grad W_{l}}) \d x.
\end{align}
We remind that $(\curl\bmA_{k} \times \bmtheta)$ belongs to $X(\Omega)$.
We multiply \eqref{eq:PotentialAdj}$_1$ by
$\big(\overline{\curl A_{k} \times \bmtheta}\big)$,
integrate by parts and then take the complex conjugate, which implies
\begin{align}\label{eq:suanI}
  \mathcal{I} & =
  \int_{\Omega_{d}}(\dfrac{1}{\mu}-\dfrac{1}{\mu^{0}})\curl \bmA_{l}^{0}
    \cdot \curl(\curl \bmA_{k} \times \bmtheta) \d x
    - \dfrac{1}{\mi\omega}\int_{\Omega_{d}}[\sigma](\mi\omega \bmA_{l}^{0} + \grad V_{l}^{0})
    \cdot\big(\mi\omega (\curl A_{k} \times \bmtheta)\big)\d x \notag \\
  & \quad + \int_{\Gamma} \left[\dfrac{1}{\mu}\curl\overline{\bmP_{l}}
    \cdot \big((\curl \bmA_{k} \times \bmtheta)\times \bmn\big)\right]\d s
  + \int_{\Gamma}\left[\dfrac{1}{\mu}\right] \curl\overline{\bmA_{l}^{0}}
    \cdot \big((\curl \bmA_{k} \times \bmtheta)\times \bmn\big) \d s \notag \\
  & = \int_{\Omega_{d}}\left[\dfrac{1}{\mu}\right]\curl \bmA_{l}^{0}
    \cdot \curl\big((\bmtheta\cdot\grad)\bmA_{k} + (\grad\bmtheta)^{t}\bmA_{k}\big) \d x \notag \\
  & \quad - \dfrac{1}{\mi\omega}\int_{\Omega_{d}}[\sigma](\mi\omega \bmA_{l}^{0} + \grad V_{l}^{0})
    \cdot\bigg(\mi\omega \big((\bmtheta\cdot\grad)\bmA_{k} + (\grad\bmtheta)^{t}\bmA_{k}\big)
    + \grad(\bmtheta\cdot \grad V_{k}) \bigg)\d x \notag \\
  & \quad - \int_{\Gamma} (\bmtheta\cdot \bmn) [\mu] \left(\dfrac{1}{\mu}\curl \bmA_{k} \times \bmn \right)
    \cdot \left(\dfrac{1}{\mu^{0}} (\curl\overline{\bmP_{l}})_{+}\times \bmn\right) \d s
\end{align}
The last equality is due to the transmission conditions
\eqref{eq:PotentialAdj}$_2$ -- \eqref{eq:PotentialAdj}$_3$ for $\bmP_{l}$
and those for $\bmA_{k}$ on $\Gamma$:
$[\bmn\cdot\curl \bmA] = [\mu^{-1} \bmn\times \curl \bmA] = 0$.
\eqref{eq:suan0} and \eqref{eq:suanI} imply
\begin{align}\label{eq:AdjTestDifferenceShape}
  & \overline{L^{*}(\bmB_{k}(\bmtheta),U_{k}(\bmtheta))}
    - \int_{\Omega_{d}}\left[\dfrac{1}{\mu}\right]\curl \bmA_{l}^{0}
    \cdot \curl\big((\bmtheta\cdot\grad)\bmA_{k} + (\grad\bmtheta)^{t}\bmA_{k}\big) \d x \notag \\
  & \quad + \dfrac{1}{\mi\omega}\int_{\Omega_{d}}[\sigma](\mi\omega \bmA_{l}^{0} + \grad V_{l}^{0})
    \cdot\bigg(\mi\omega \big((\bmtheta\cdot\grad)\bmA_{k} + (\grad\bmtheta)^{t}\bmA_{k}\big)
    + \grad(\bmtheta\cdot \grad V_{k}) \bigg)\d x \notag \\
  & = \int_{\Gamma}(\bmtheta\cdot \bmn)\bigg\{
    \left[\dfrac{1}{\mu}\right](\bmn\cdot \curl\bmA_{k}) (\bmn\cdot\curl\overline{\bmP_{l}})
    - [\mu] \left(\dfrac{1}{\mu}\curl \bmA_{k} \times \bmn \right)
    \cdot \left(\dfrac{1}{\mu^{0}} (\curl\overline{\bmP_{l}})_{+}\times \bmn\right) \notag \\
  & \qquad \qquad \qquad + \dfrac{1}{\mi\omega} [\sigma](\mi\omega \bmA_{k\tau}
    + \grad_{\tau}V_{k})\cdot(\overline{\mi\omega \bmP_{l\tau}+\grad_{\tau}W_{l}}) \bigg\}\d s.
\end{align}

Considering the definition of $L^{*}(\cdot,\cdot)$, we substitute the above integral
\eqref{eq:AdjTestDifferenceShape} in the expression of shape derivative of
$\triangle Z_{kl}$ \eqref{eq:impedanceDerivative3dBis} and finally obtain \eqref{eq:impedanceDerivative3dFinal}.
\end{proof}

\bibliographystyle{plain} \bibliography{biblio}

\end{document}